\theoremstyle{plain}
\newtheorem{theorem}{Theorem}[section]
\newtheorem{lemma}[theorem]{Lemma}
\newtheorem{proposition}[theorem]{Proposition}
\theoremstyle{remark}
\newtheorem{remark}{Remark}[section]
\newcommand{\Oun}{\mathscr{O}(1)}
\newcommand{\dvdx}{\,dv\,dx}
\newcommand{\dsdvdx}{\,ds\,dv\,dx}
\newcommand{\uper}{u_{\mathrm{per}}}
\newcommand{\lunp}{L^{1}_{\mathrm{per}}(ds\,dx)}
\newcommand{\gen}{\mathcal{L}}
\newcommand{\lunpx}{L^{1}_{\mathrm{per}}(dx)}
\newcommand{\lunpl}{\mathscr{B}}
\begin{document}
\title{Asymptotic velocity of one dimensional diffusions  with periodic drift}
\bigskip

\author{P. Collet\\
Centre de Physique Th\'eorique\\
CNRS UMR 7644\\
Ecole Polytechnique\\
F-91128 Palaiseau Cedex (France)\\
e-mail: collet@cpht.polytechnique.fr
\and S.Mart\'\i nez\\
Universidad de Chile,\\
 Facultad de Ciencias F\'{\i}sicas y Matem\'aticas,\\
Departamento de Ingenier\'{\i}a Matem\'atica \\
and Centro de Modelamiento Matem\'atico\\
UMR 2071 UCHILE-CNRS, \\
Casilla 170-3 Correo 3, Santiago, Chile.\\
e-mail: smartine@dim.uchile.cl
}

\maketitle

\begin{abstract}
We consider the asymptotic behaviour of the solution of one
dimensional stochastic differential equations and Lagevin equations in
periodic backgrounds with zero average. We prove that in several such
models, there is genrically a non vanishing asymptotic velocity,
despite of the fact that the average of the background is zero.
\end{abstract}

\section{Introduction.} 
We consider the one dimensional diffusion problem
\begin{equation}\label{modele1}
\partial_{t}u=\partial_{x}\left(\frac{1}{2}\partial_{x}u+b(t,x)u\right)
\end{equation}
where $b$ is a regular function periodic of period $T$ in the time
variable $t\in\mathbf{R}^{+}$ and of period $L$ in the space variable
$x\in\mathbf{R}$.  This equation and related equations discussed below
appear in many natural questions like molecular motors, population
dynamics, pulsed dielectrophoresis, etc.  See for example \cite{hhl},
\cite{han}, \cite{moreau} \cite{reimann}, \cite{quian} and references
therein.

We assume that the initial condition $u(0,x)$ is non negative and of
integral one with $|x|u(0,x)$ integrable, and denote by $u(x,t)$ the
solution at time $t$.  Note that the integral of $u$ with respect to
$x$ is invariant by time evolution, the integral of $xu(x,t)$ is
finite.

One of the striking phenomenon is that even if the drift has zero
average, this system may show a non zero average speed.  There are
many results on homogenization theory which can be applied to equation
(\ref{modele1}), see for example \cite{garnier}, \cite{molchanov}, and
references therein.

These results say that the large time asymptotic is given by the
solution of a homogenized problem. It remains however to understand if
this homogenized problem leads to a non zero asymptotic averaged
velocity. For this purpose we will consider the quantity
\begin{equation}\label{ib}
I(b)=\lim_{t\to\infty}\frac{1}{t}\int x u(t,x) dx
\end{equation}
which describes the asymptotic average displacement of the particle
per unit time (provided the limit exists).

Our first theorem states that the average asymptotic velocity is
typically non zero.

\begin{theorem}\label{principal1}
The quantity  $I(b)$ is independent of the initial condition, 
and the set of $b\in C^{1}$ with space average and time average equal to
zero where $I(b)\neq0$ is open and dense. 
\end{theorem}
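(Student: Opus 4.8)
The plan is to express $I(b)$ through the unique time-periodic invariant density of the equation seen on the circle, and then to combine analytic perturbation theory with a single explicit example. For the independence of the initial condition, I would periodise the solution, $\uper(t,x)=\sum_{k\in\mathbf{Z}}u(t,x+kL)$; it solves (\ref{modele1}) on the circle $\reel/L\reel$ with $\int_0^L\uper(t,x)\,dx=1$ for all $t$. Multiplying (\ref{modele1}) by $x$, integrating over $\reel$ and integrating by parts (the boundary terms vanish by the propagated integrability of $|x|u$ and parabolic tail decay) gives $\frac{d}{dt}\int_{\reel}x\,u(t,x)\,dx=-\int_0^L b(t,x)\uper(t,x)\,dx$. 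Let $P=U(T,0)$ be the time-$T$ propagator of the equation on $L^1(\reel/L\reel)$: it is compact and positivity preserving with a smooth strictly positive kernel (parabolic strong maximum principle), so by Krein--Rutman $1$ is a simple isolated eigenvalue, with strictly positive eigenfunction $u^*_0$, and the remaining spectrum sits in a strictly smaller disc. By $T$-periodicity, $\uper(nT+\tau,\cdot)=U(\tau,0)\,P^{\,n}\uper(0,\cdot)\to U(\tau,0)u^*_0=:u^*_\tau$ in $L^1$, a $T$-periodic family of probability densities; a Ces\`aro average of the previous identity then gives
\[
I(b)=-\frac{1}{T}\int_0^T\!\!\int_0^L b(s,x)\,u^*_s(x)\,dx\,ds ,
\]
so the limit (\ref{ib}) exists and does not depend on $u(0,\cdot)$.

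For the openness, the generator depends affinely on $b$, hence $U(T,0)$ depends analytically on $b$ (via its Duhamel expansion, the first-order perturbation being relatively bounded and controlled by parabolic smoothing), and by Kato's perturbation theory for the simple isolated eigenvalue $1$ the eigenprojection, hence $u^*$, hence $I$, depends continuously --- in fact real-analytically --- on $b$ in the $C^1$ topology. Therefore $\{b:I(b)\neq0\}$ is open in the space $X$ of $C^1$ drifts with zero space average and zero time average.

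For the density it suffices to show $\{I=0\}$ has empty interior in $X$. Suppose $I$ vanished on a ball around some $b^*\in X$. Given any $b_0\in X$, set $\phi=b_0-b^*$, so that $b^*+\lambda\phi\in X$ for all real $\lambda$ (the defining conditions of $X$ being linear); since $1$ stays a simple isolated eigenvalue of the corresponding propagator for every real $\lambda$, the function $\lambda\mapsto I(b^*+\lambda\phi)$ is real-analytic on all of $\reel$, and it vanishes near $\lambda=0$, hence identically, so $I(b_0)=0$. Thus $I\equiv0$ on $X$, and to reach a contradiction I would exhibit one drift with $I\neq0$. Take $b_1(t,x)=\cos\!\big(2\pi(t/T+x/L)\big)\in X$ and, for $b=\varepsilon b_1$, expand $u^*=\tfrac1L+\varepsilon u_1+O(\varepsilon^2)$; the linearised equation $\partial_t u_1=\tfrac12\,\partial_x^2u_1+\tfrac1L\,\partial_x b_1$ is solved explicitly by Fourier series, the $O(\varepsilon)$ term of $I$ drops because $b_1$ has zero mean, and a one-line computation yields
\[
I(\varepsilon b_1)=-\,\varepsilon^{2}\,\frac{2\pi^{2}/(LT)}{\bigl(2\pi^{2}/L^{2}\bigr)^{2}+\bigl(2\pi/T\bigr)^{2}}+O(\varepsilon^{3})\neq0
\]
for all small $\varepsilon\neq0$, contradicting $I\equiv0$ on $X$. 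Hence $\{I\neq0\}$ is dense, and with the previous step open and dense.

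I expect the main obstacles to be the spectral input of the first step --- a clean uniqueness and spectral-gap statement for the time-periodic propagator on $L^1$ under only $C^1$ regularity, and the ensuing $L^1$-convergence of $\uper$ --- together with making the analyticity of the last two steps rigorous (that $U(T,0)$ really is an analytic function of $b$ in a Banach-space sense adapted to Kato's framework); granted these, the explicit example is routine.
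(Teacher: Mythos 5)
Your proposal is correct and reaches the theorem by the same overall strategy as the paper---express $I(b)$ through the unique time-periodic invariant density, get openness from analyticity of $b\mapsto I(b)$, and get density from an identity-theorem argument plus a nonvanishing second-order term---but the technical scaffolding differs at each step, in ways worth comparing. Where you work with the time-$T$ Floquet (monodromy) operator $U(T,0)$ on $L^{1}$ of the circle, using Krein--Rutman for the simple eigenvalue $1$ and a Duhamel-series argument for analytic dependence on $b$, the paper instead makes the problem autonomous by adjoining time as a variable: it studies the generator $\gen=-\partial_{s}+\partial_{x}\bigl(\tfrac12\partial_{x}\,\cdot+b\,\cdot\bigr)$ of a diffusion on the two-dimensional torus acting in $\lunp$ (Propositions \ref{deuxtemps} and \ref{asympt1}), and proves analyticity by showing that $\partial_{x}(b\,\cdot)$ is relatively bounded with relative bound zero with respect to $-\partial_{s}+\tfrac12\partial_{x}^{2}$---via the same heat-kernel smoothing estimate $\bigl\|\partial_{x}e^{\tau\tilde\gen}\bigr\|\le C\tau^{-1/2}$ that you would need to sum your Duhamel series---and then invokes Kato's type-(A) perturbation theory for the simple isolated eigenvalue $0$. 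For nontriviality the paper computes the full quadratic form $D^{2}I_{0}(B,B)$ over all Fourier modes (Lemma \ref{pastrivial}), which also yields the indefiniteness exploited in the subsequent remark, whereas you evaluate a single mode $\cos\bigl(2\pi(t/T+x/L)\bigr)$; your one-mode computation uses the same linearized equation and its $\epsilon^{2}$ coefficient is indeed nonzero (it agrees with a direct redo of the paper's Plancherel computation up to normalization conventions), which is all the argument needs. Your density step---restricting $I$ to affine lines in the zero-average subspace and applying the one-variable identity theorem---is an elementary substitute for the paper's appeal to zero sets of analytic functions on Banach spaces (\cite{mujica}) together with path-connectedness. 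The two points you flag as the main obstacles, namely the $L^{1}$ spectral gap for the period map and the Banach-space analyticity of $b\mapsto U(T,0)$, are exactly the content of the paper's Propositions \ref{deuxtemps} and \ref{analytique}, so your route closes once those are written out in full.
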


\begin{remark}
By assuming that the space average (which may depend on time) and the
time average (which may depend on space) are both zero we restrict the
problem to a smaller set of possible drifts. One can prove a similar
result with weaker constraints. Note also that it is well known that if
$b$ does not depend on time, then $I(b)=0$ (see for example \cite{reimann}). 
\end{remark}

\begin{remark}
The theorem can be extended in various directions, for example by using
different topologies on $b$, by including a non-constant periodic
diffusion coefficient, or by considering almost periodic time dependence. 
\end{remark}

Another common model for molecular motor is the two states  model which
describes the time evolution of two non negative function $\rho_{1}$ and
$\rho_{2}$. In this model, the ``molecule'' can be in two states: $1$
or $2$ which have different interaction  with the landscape described
by the drift.
We denote by  $\rho_{1}(t,x)$  the probability to find
the molecule at site $x$ ate time $t$ in state  $1$, and
similarly for $\rho_{2}$. We refer to \cite{reimann} for more details.
The evolution equations are given by
\begin{equation}\label{modele2}
\begin{array}{ll}
\partial_{t}\rho_{1}=
&\partial_{x}\big(D\partial_{x}\rho_{1}+b_{1}(x)\rho_{1}\big)
-\nu_{1}\rho_{1}+\nu_{2}\rho_{2}\cr
\partial_{t}\rho_{2}=
&\partial_{x}\big(D\partial_{x}\rho_{2}+b_{2}(x)\rho_{2}\big)
+\nu_{1}\rho_{1}-\nu_{2}\rho_{2}\cr
\end{array}
\end{equation}
where $D$, $\nu_{1}$ and $\nu_{2}$ are positive constants,  $b_{1}$
and $b_{2}$ are $C^{1}$ periodic functions of $x$ of period $L$ the
last two with average zero.

The asymptotic average displacement per unit time 
of the particle is now defined by
$$
I\big(\nu_{1},\nu_{2},b_{1},b_{2}\big)=
\lim_{t\to\infty}\frac{1}{t}
\int x\,\big(\rho_{1}(t,x)+\rho_{2}(t,x)\big)\,dx\;.
$$
We have the equivalent of Theorem \ref{principal1}. As before, we
assume that $|x|(\rho_{1}+\rho_{2})$ is integrable.

\begin{theorem}\label{principal2} For any constants  $\nu_{1}>0$ and
  $\nu_{2}>0$, $I(\nu_{1},\nu_{2},b_{1},b_{2})$ is independent of the
 initial condition, and the set of   $b_{1}$ and $b_{2}$
 $\in C^{1}$  with space average equal to
zero where $I(\nu_{1},\nu_{2},b_{1},b_{2})\neq0$ is open and dense.
\end{theorem}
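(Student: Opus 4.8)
The plan is to adapt the scheme of Theorem~\ref{principal1} to the coupled system \eqref{modele2}. The first step is to reduce to a periodic (cell) problem. Since $b_1,b_2$ depend only on $x$ with period $L$, the vector density $\rho=(\rho_1,\rho_2)$, viewed modulo $L$, evolves under a linear operator $\gen$ on $\big(\lunpx\big)^2$ whose adjoint is a uniformly elliptic second-order operator with a zeroth-order mixing term $\begin{pmatrix}-\nu_1&\nu_2\\\nu_1&-\nu_2\end{pmatrix}$. I would first show that $\gen$ has a unique (up to normalization) positive invariant density $\pi=(\pi_1,\pi_2)$ on the torus, and that the periodization of $\rho(t,\cdot)$ converges to $\pi$ exponentially fast. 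This follows from Krein--Rutman / Perron--Frobenius theory together with hypoellipticity and a Harnack/Doeblin argument; the off-diagonal rates $\nu_1,\nu_2>0$ guarantee irreducibility between the two states. Exponential return to $\pi$ is what makes $I$ exist and be independent of the initial condition, exactly as in Theorem~\ref{principal1}; the argument there transfers essentially verbatim once the spectral gap is in hand.

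**Next**, I would derive a closed formula for $I(\nu_1,\nu_2,b_1,b_2)$ in terms of $\pi$. Summing the two equations in \eqref{modele2}, the mixing terms cancel, so $\partial_t(\rho_1+\rho_2)=\partial_x\big(D\partial_x(\rho_1+\rho_2)+b_1\rho_1+b_2\rho_2\big)$; multiplying by $x$ and integrating gives
\begin{equation}\label{Iformula}
\frac{d}{dt}\int x(\rho_1+\rho_2)\,dx=-\int\big(D\partial_x(\rho_1+\rho_2)+b_1\rho_1+b_2\rho_2\big)\,dx\;.
\end{equation}
By periodicity the $D\partial_x$ term contributes nothing over a period, and using the exponential convergence of the periodization to $\pi$ one gets
\begin{equation}\label{Iexplicit}
I(\nu_1,\nu_2,b_1,b_2)=-\frac{1}{L}\int_0^L\big(b_1(x)\pi_1(x)+b_2(x)\pi_2(x)\big)\,dx\;.
\end{equation}
So $I$ is a smooth (indeed real-analytic) functional of $(b_1,b_2)$, since $\pi$ depends smoothly on the coefficients by standard elliptic perturbation theory. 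Openness of $\{I\neq0\}$ is then immediate from continuity of $I$ in the $C^1$ topology.

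**For density**, the task is to show that any $(b_1,b_2)$ with zero space average can be perturbed, by an arbitrarily $C^1$-small perturbation keeping zero space average, to make $I\neq0$. Here I would argue by contradiction: suppose $I\equiv 0$ on a $C^1$-ball around some $(b_1^0,b_2^0)$. Then the first variation of $I$ vanishes at every point of that ball. Computing the Gateaux derivative of \eqref{Iexplicit} in a direction $(\beta_1,\beta_2)$ (mean zero) and using the linearized cell equation for $\delta\pi$, one obtains a linear identity that must hold for all admissible $(\beta_1,\beta_2)$; I expect this to force a rigid algebraic relation among $b_1^0,b_2^0,\pi^0$ that is violated on an open dense set — in particular it should fail whenever $b_1^0=b_2^0$ (the decoupled case, where $I=0$ for a different, symmetry reason) is perturbed asymmetrically. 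Concretely, a convenient explicit family to test with: take $b_2=0$ and $b_1$ a small multiple of $\sin(2\pi x/L)$ plus a correction; then $\pi$ can be computed perturbatively and \eqref{Iexplicit} is nonzero at second order in the amplitude, giving a dense (in fact generic) set where $I\neq0$.

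**The main obstacle** I anticipate is the density step, specifically showing that the quadratic-order term in the perturbative expansion of \eqref{Iexplicit} does not vanish identically — i.e. ruling out a hidden cancellation analogous to the time-independent case $I(b)=0$. The structure here is genuinely different from Theorem~\ref{principal1}: there the mechanism breaking $I=0$ is the time dependence of $b$, whereas here it is the \emph{asymmetry} between the two states (different drifts $b_1\neq b_2$, and possibly $\nu_1\neq\nu_2$), so I must make sure the perturbation exploits that asymmetry and is not killed by the reflection symmetry that forces $I=0$ in the single-state stationary problem. Once a single explicit perturbation with $I\neq0$ is exhibited and the derivative computation shows it is not isolated, genericity follows by the same real-analyticity / unique-continuation argument (a nonzero real-analytic function on a connected set vanishes only on a nowhere-dense set) used for Theorem~\ref{principal1}.
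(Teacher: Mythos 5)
Your overall framework coincides with the paper's: reduce to the spatially periodic cell problem, show via compactness and positivity of the semi-group (Proposition \ref{et2}) that there is a unique normalised positive stationary pair $(w_1,w_2)$ depending analytically on $(b_1,b_2)$, express the velocity as $I=\int_0^L(b_1w_1+b_2w_2)\,dx$ as in (\ref{connu2}), get openness from continuity, and get density from real-analyticity plus the existence of a single point where $I\neq0$. The genuine gap is in that last nontriviality step, which is exactly where this model differs from Theorem \ref{principal1}.

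The cancellation you flag as the "main obstacle" actually happens: the paper computes, by Fourier series and Plancherel exactly as in Lemma \ref{pastrivial}, that $DI_0=0$ \emph{and} $D^2I_0\big((b_1,b_2),(b_1,b_2)\big)=0$ identically, because $b_1,b_2$ are real and time-independent (the quadratic form is a sum of purely imaginary odd-in-$n$ terms). So your plan to exhibit nonvanishing "at second order in the amplitude" cannot work for any choice of $(b_1,b_2)$. Your concrete test family fails even more strongly: with $b_2=0$ and $b_1$ an odd function such as a multiple of $\sin(2\pi x/L)$, the system (\ref{modele2}) is invariant under $x\mapsto-x$ (the transformed drift is $-b_1(-x)=b_1(x)$), hence $w_1,w_2$ are even and $I=0$ exactly, not just to second order — this is the same reflection symmetry that kills the current in the single-state stationary ratchet. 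To close the argument one must go to the third differential at the origin, a genuinely involved cubic expression; the paper evaluates it with a symbolic computation (Maxima) for $L=2\pi$, $D=1$, $b_1(x)=\cos(2x)$, $b_2(x)=\cos(x)$ (two \emph{different} harmonics, breaking the symmetry), obtaining $D^3I_0=-\nu_1\nu_2(\nu_2-2\nu_1+1)\big/\big(4(\nu_1+\nu_2)(\nu_1+\nu_2+1)^2(\nu_1+\nu_2+4)\big)$, which is nonzero for generic $\nu_1,\nu_2$. Without producing such a third-order (or otherwise explicit) example with $I\neq0$, and without justifying why the first-variation identity you invoke in the contradiction argument actually forces a contradiction, the density half of the theorem remains unproven in your proposal.
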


Another model of a particle interacting with a thermal bath is given
by the Langevin equation
\begin{equation}\label{langevin}
\begin{array}{ll}
dx=&\!\!\!v\,dt\\
dv=&\!\!\!(-\gamma\,v+F(t,x)/m)\,dt+\sigma \,dW_{t}
\end{array}
\end{equation} 
where $m$ is the mass of the particle, $\gamma>0$ the friction
coefficient, $F(t,x)$ the force, $W_{t}$ the Brownian motion and
$\sigma=\sqrt{2D}$ where $D$ is the diffusion coefficient.  We refer
to \cite{hhl} and \cite{moreau} for more details.

For the time evolution of the probability density $f(t,v,x)$ of the
position and velocity of the particle one gets the so called Kramers
equation
\begin{equation}\label{kramers}
\partial_{t}f=-v\,\partial_{x}f+\partial_{v}\big[(\gamma v-F(t,x)/m)f\big]
+\frac{D}{2}\partial_{v}^{2}f\;.
\end{equation}

We refer to \cite{hhl} and references therein for more details on
these equations. By changing scales, we can assume that $m=1$ and
$D=1$ and we will only consider this situation below. Moreover we will
assume as before that $F(t,x)$ is periodic of period $T$ in time, $L$
in space and with zero average in space and time.  We can now define
the average asymptotic displacement per unit time by
\begin{equation}\label{vit3}
I(\gamma,F)=\lim_{t\to\infty}\frac{1}{t}\int_{0}^{t}d\tau
\int\!\!\!\int v\,f(\tau,v,x)\,dv\,dx\;.
\end{equation}

As for the previous models the average asymptotic velocity is
typically non zero. As usual, we denote by $H^{1}(\dvdx)$ the Sobolev
space of square integrable functions of $x$ and $v$ with square
integrable gradient.

\begin{theorem}\label{principal3}
For $\gamma>0$, $I(\gamma,F)$ is independent of the initial condition,
the set of $F\in C^{1}$ with space average and time
average equal to zero where $I(\gamma,F)\neq0$ 
is open and dense.
\end{theorem}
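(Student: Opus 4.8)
The plan is to run the same program as for Theorem~\ref{principal1}, the essential new difficulty being that the Kramers operator
\[
\mathcal{L}_{F}=-v\,\partial_{x}+\partial_{v}\big((\gamma v-F)\,\cdot\,\big)+\tfrac12\partial_{v}^{2}
\]
is only hypoelliptic, there being no diffusion in the $x$ variable. First I would set up the homogenization picture on the compact ``cell'' $(\tau,x)\in\tore$ (periods $T$ and $L$) together with $v\in\mathbf{R}$: H\"ormander's theorem gives interior smoothing, and hypocoercivity for the kinetic Fokker--Planck operator, which applies for any $\gamma>0$ and any $C^{1}$ periodic $F$ thanks to the confinement in $v$ produced by the friction, gives that the period-$T$ evolution operator on the cell has a simple dominant eigenvalue (equal to $1$) with a spectral gap, with a strictly positive, smooth, $T$- and $L$-periodic invariant density $\bar f_{F}(\tau,v,x)$ with Gaussian tails in $v$. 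Since $\frac{d}{dt}\iint x f=\iint v f$ by the continuity equation, one has $I(\gamma,F)=\lim_{t}\frac1t\iint x\,f(t,v,x)\dvdx$, and the homogenization estimates make the first moment of the difference of two unit-mass solutions $o(t)$, which yields independence of the initial condition.

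Second, multiplying the (time-periodic) stationary Kramers equation by $v$, integrating over $\mathbf{R}\times\tore$ and averaging over one period, the time-derivative, diffusion and free-transport terms all drop out and only the friction and forcing terms survive, leaving the identity
\[
\gamma\,I(\gamma,F)=\frac1T\int_{0}^{T}\!\!\int F(\tau,x)\,\rho_{F}(\tau,x)\,dx\,d\tau,
\qquad \rho_{F}(\tau,x)=\int\bar f_{F}(\tau,v,x)\,dv .
\]
Analytic perturbation theory for the isolated simple eigenvalue makes $F\mapsto\bar f_{F}$, hence $F\mapsto I(\gamma,F)$, continuous in the $C^{1}$ topology — so $\{I\neq0\}$ is open — and, crucially, real-analytic along every affine segment of drifts.

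Third, the genericity. Write $F=\varepsilon F_{1}$ with $F_{1}$ a trigonometric polynomial of zero space and time average. Since $\rho_{0}\equiv1/L$ and $F_{1}$ has zero space average, the $O(\varepsilon)$ term above vanishes and $\gamma I=\varepsilon^{2}Q(F_{1})+O(\varepsilon^{3})$ with $Q(F_{1})=\frac1T\int_{0}^{T}\!\int F_{1}\rho_{1}$, where $\rho_{1}=\int f_{1}\,dv$ and $f_{1}$ solves the linearised equation $(\partial_{t}+v\partial_{x}-\partial_{v}(\gamma v\cdot)-\tfrac12\partial_{v}^{2})f_{1}=-\partial_{v}(F_{1}f_{0})$ with $f_{0}=g(v)/L$, $g\propto e^{-\gamma v^{2}}$. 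Expanding $F_{1}$ in the modes $e^{i(m\omega t+\ell\kappa x)}$, $\omega=2\pi/T$, $\kappa=2\pi/L$, diagonalises $Q$: each mode with $m\neq0$ and $\ell\neq0$ contributes $|c_{m,\ell}|^{2}$ times the real part of a scalar obtained by inverting the one-dimensional operator $im\omega+i\ell\kappa v-\big(\partial_{v}(\gamma v\cdot)+\tfrac12\partial_{v}^{2}\big)$ — invertible for $\ell\neq0$ by hypocoercivity — applied to $g'$ and integrated in $v$. Expanding in Hermite functions turns this into an infinite tridiagonal system solved by a continued fraction, from which the needed real part is extracted; a direct estimate — comparing with the large-friction limit, where the whole computation reduces to the elementary overdamped one underlying Theorem~\ref{principal1}, or, equivalently, passing to the co-moving frame $y=x-(\omega/\ell\kappa)t$, in which a traveling-wave drift becomes a constant tilt plus a small periodic force and the corresponding stationary Kramers current is generically non-zero — shows that $Q$ is not identically zero, e.g.\ it is non-zero on $F_{1}(t,x)=\cos(\omega t-\ell\kappa x)$ for a suitable $\ell$. (One also notes the obvious obstruction that rules out bad choices: if $F(t,\cdot)$ is odd in $x$ then $\bar f(t,v,x)=\bar f(t,-v,-x)$ by uniqueness, hence $I=0$; the traveling wave above is not odd in $x$.)

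Finally, density. If $\{I(\gamma,\cdot)=0\}$ contained a ball $B(P,\delta)$ in the space of $C^{1}$ drifts of zero averages, pick $F_{1}$ with $Q(F_{1})\neq0$ and $\varepsilon_{0}>0$ small with $I(\gamma,\varepsilon_{0}F_{1})\neq0$; then $s\mapsto I\big(\gamma,(1-s)P+s\varepsilon_{0}F_{1}\big)$ is real-analytic on a neighbourhood of $[0,1]$ and vanishes for $s$ near $0$, hence on all of $[0,1]$, contradicting its value at $s=1$. So $\{I=0\}$ has empty interior, and being closed it is nowhere dense, whence $\{I\neq0\}$ is open and dense. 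The main obstacle is the first step: because $\mathcal{L}_{F}$ is degenerate, the existence of the limit, independence of the initial condition, the spectral gap and the analytic dependence on $F$ all rest on the hypoelliptic/hypocoercive machinery rather than on standard parabolic estimates; granting that, the remaining steps are an integration by parts, a Fourier--Hermite computation, and a soft analyticity argument.
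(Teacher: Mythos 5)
Your overall route coincides with the paper's: pass to the extended cell $(s,x)\in[0,T]\times[0,L]$ with $v\in\mathbf{R}$, prove existence, uniqueness, positivity and analytic dependence on $F$ of the invariant density (the paper does this in the weighted space $L^{2}(e^{\gamma v^{2}}ds\,dv\,dx)$, getting compactness of the semi-group and then analyticity from the relative bound zero of $F(\partial_{v}-\gamma v)$ with respect to the unperturbed generator), derive $\gamma\,I(\gamma,F)=\frac{1}{T}\int F\,\tilde f$, expand to second order at $F=0$, and conclude density by the analyticity/connectedness argument. Up to the choice of functional-analytic machinery (hypocoercivity versus the paper's compactness-plus-control arguments), all of that is consistent with the paper.

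The genuine gap is the non-vanishing of the second-order form $Q$ at fixed $\gamma>0$, which is the heart of the genericity statement and which you only assert. Neither of your two suggested justifications closes it. (i) The ``large-friction limit'' is not available: the theorem is for each fixed $\gamma$, and the regime that actually works in the paper is the opposite one --- after Fourier transforming the linearized equation in $v$, the mode-$(p,q)$ coefficient is an explicit integral governed by the parameter $\alpha=2\pi^{2}q^{2}/(\gamma^{3}L^{2})$, and non-vanishing is proved by steepest descent as $\alpha\to\infty$, i.e.\ for large spatial frequency $q$ at fixed $\gamma$ (large $\gamma$ sends $\alpha\to0$, where nothing is claimed); turning the overdamped heuristic into a proof would require uniformity of the $\epsilon$-expansion in $\gamma$ and a transfer of non-vanishing from $\gamma=\infty$ to every finite $\gamma$, none of which you supply. (ii) The co-moving frame remark is circular: for $F=\epsilon\cos(\omega t-\ell\kappa x)$, at $\epsilon=0$ the co-moving stationary state is an exact shifted Maxwellian and the lab-frame current vanishes, so whether the $O(\epsilon^{2})$ correction to the current of the tilted Kramers problem is nonzero is precisely the quantity $Q$ you need to bound away from zero; ``the stationary current is generically non-zero'' restates the claim. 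The paper's mechanism is concrete: the Fourier transform in $v$ turns the mode equation into a first-order ODE in the dual variable $k$, solved in closed form by an integrating factor, giving $\hat f_{1,p,q}(0)=\hat G_{p,q}\Gamma_{p,q}$ with $\Gamma_{p,q}$ a one-dimensional integral; the symmetric combination $(\Gamma_{p,q}+\Gamma_{-p,-q})/2$ vanishes for $p=0$, is odd in $p$, and its non-vanishing for $p\neq0$ and $q$ large follows from the estimate $\Im J(\alpha)=-\beta\,(2\alpha)^{-3/2}\pi^{-1/2}+\Oun\,\alpha^{-2}$. Your Hermite/continued-fraction reduction is plausible in principle, but without an actual estimate extracted from it (or an equivalent of the paper's explicit computation) the density half of the theorem remains unproved.
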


One can also consider a situation where the particle can be in 
two states which interact differently with the landscape. This leads
to the following system of  Kramers equation.

\begin{equation}\label{kramers2}
\begin{array}{ll}
\partial_{t}f_{1}=&\frac{1}{2}\partial_{v}^{2}f_{1}-v\partial_{x}f_{1}
+\partial_{v}\big[(\gamma v-F_{1}(x))f_{1}\big]
-\nu_{1}f_{1}+\nu_{2}f_{2}\\
\partial_{t}f_{2}=&\frac{1}{2}\partial_{v}^{2}f_{2}-v\partial_{x}f_{2}
+\partial_{v}\big[(\gamma v-F_{2}(x))f_{2}\big]
+\nu_{1}f_{1}-\nu_{2}f_{2}\;.
\end{array}
\end{equation}

In this equation, $F_{1}$ and $F_{2}$ are two periodic functions
representing the different interaction forces between the two states
of the particle and the substrate. The positive constants $\nu_{1}$
and $\nu_{2}$ are the transition rates between the two states. The non
negative functions $f_{1}$ and $f_{2}$ are the probability densities
of being in state one and two respectively. The total probability
density of the particle is the function $f_{1}+f_{2}$ which is
normalised to one. The asymptotic displacement per unit time for this
model is given by

\begin{equation}\label{vit4}
I(\gamma,F_{1},F_{2},\nu_{1},\nu_{2})
=\lim_{t\to\infty}\frac{1}{t}\int_{0}^{t}ds
\int\!\!\!\int v\,\big(f_{1}(s,v,x)+f_{2}(s,v,x)\big)\,dv\,dx\;,
\end{equation}
and we will prove the following result

\begin{theorem}\label{principal4}
For $\gamma>0$, $\nu_{1}>0$ and $\nu_{2}>0$,
$I(\gamma,F_{1},F_{2},\nu_{1},\nu_{2})$ is independent of the initial
condition, and the set of $F_{1}$ and $F_{2}\in C^{1}$ with space
average equal to zero where
$I(\gamma,F_{1},F_{2},\nu_{1},\nu_{2})\neq0$ is open and dense.
\end{theorem}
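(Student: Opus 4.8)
\emph{Proof proposal.} The plan is to run the three-step scheme used for Theorem~\ref{principal3}; the reaction coupling $\pm(\nu_1f_1-\nu_2f_2)$ enters only as a bounded perturbation of a hypoelliptic operator, so most of the work is the same. \textbf{Step 1 (identification of the current).} Because $F_1,F_2$ depend only on $x$ and are $L$-periodic, folding the densities, $\tilde f_i(t,v,x)=\sum_{k\in\mathbf{Z}}f_i(t,v,x+kL)$, yields a solution of (\ref{kramers2}) on the cylinder $\mathbf{T}\times\reel$; adding the two equations shows $\int(\tilde f_1+\tilde f_2)$ is conserved. Hypoellipticity, a Gaussian Lyapunov function in $v$, and irreducibility of the coupled system give a unique invariant probability density $(\bar f_1,\bar f_2)$ with Gaussian tails in $v$ and exponential relaxation $\tilde f_i(t)\to\bar f_i$ in a suitable weighted norm (standard hypocoercivity). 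Using the tail estimates already proved for Theorem~\ref{principal3} to interchange the limits, and $\int_{\reel^2}v f_i\,dv\,dx=\int_{\mathbf{T}\times\reel}v\tilde f_i\,dv\,dx$, one gets
$$I(\gamma,F_1,F_2,\nu_1,\nu_2)=\int_{\mathbf{T}\times\reel}v\big(\bar f_1(v,x)+\bar f_2(v,x)\big)\,dv\,dx=:J(F_1,F_2),$$
which is visibly independent of the initial condition.

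\textbf{Step 2 (openness and the first variation).} Let $\mathcal{L}_F$ be the generator of (\ref{kramers2}) on $\mathbf{T}\times\reel$. The spectral gap persists under $C^1$ perturbations of $(F_1,F_2)$, so the spectral projection onto the one-dimensional kernel --- hence $(\bar f_1,\bar f_2)$ and $J$ --- depends analytically on $(F_1,F_2)$ in the $C^1$ topology (Kato); in particular $\{J\neq0\}$ is open. For the density statement it suffices to treat a pair with $J=0$, and there the first variation is computable: differentiating $\mathcal{L}_F(\bar f_1,\bar f_2)=0$ along $F_i\mapsto F_i+\lambda G_i$, noting $\dot{\mathcal{L}}(\bar f_1,\bar f_2)=(-\partial_v(G_1\bar f_1),-\partial_v(G_2\bar f_2))$ has zero total integral, and introducing the corrector $\chi=(\chi_1,\chi_2)$ solving $\mathcal{L}_F^{*}\chi=(v,v)$ (solvable since $J=0$), one obtains
$$\dot J=-\big\langle\chi,\dot{\mathcal{L}}(\bar f_1,\bar f_2)\big\rangle=-\sum_{i=1}^{2}\int_{\mathbf{T}}G_i(x)\,g_i(x)\,dx,\qquad g_i(x):=\int_{\reel}\partial_v\chi_i(v,x)\,\bar f_i(v,x)\,dv,$$
the $g_i$ being $C^1$ by hypoellipticity.

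\textbf{Step 3 (density).} Taking the admissible direction $G_i:=g_i-\langle g_i\rangle$, with $\langle g_i\rangle$ the space average, gives $\dot J=-\sum_i\|g_i-\langle g_i\rangle\|_{L^2(\mathbf{T})}^2<0$ unless $g_1$ and $g_2$ are both constant; hence every $(F_1,F_2)$ with $J=0$ outside the locus $\mathcal{N}=\{g_1,g_2\ \text{constant}\}$ is a $C^1$-limit of pairs with $J\neq0$. To eliminate $\mathcal{N}$, I would approximate an arbitrary zero-average $(F_1,F_2)$ in $C^1$ by a pair of trigonometric polynomials of degree $\le N$ with no constant term; on the finite-dimensional space $V_N$ of such pairs $J$ is real-analytic, so either $J\equiv0$ on $V_N$ or $\{J\neq0\}$ is dense in $V_N$. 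To rule out $J\equiv0$ one needs a single pair with $J\neq0$: since a static periodic force in a one-state Kramers equation carries no current, the example must have $F_1\neq F_2$ (a genuine flashing ratchet); e.g.\ $F_2=-F_1$ with $F_1(x)=\varepsilon\big(\sin(2\pi x/L)+\tfrac12\sin(4\pi x/L)\big)$, for which the expansion of $J$ in $\varepsilon$ --- carried with the corrector of Step 2 to the first non-trivial order --- has a non-zero coefficient. Combining with Step 2 gives density.

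\textbf{Main obstacle.} The hard part is the end of Step 3: exhibiting one explicit $(F_1,F_2)$ together with a rigorous lower bound $|J|>0$ requires controlling the corrector equation for the degenerate operator $\mathcal{L}_F$ with its two-state coupling and pinning down the asymmetry (distinct $F_1,F_2$, plus a non-sinusoidal shape) that makes the leading perturbative term survive --- equivalently, showing that the first-variation degeneracy ``$g_1,g_2$ both constant'' cannot hold on a $C^1$-open set. Steps 1 and 2 are essentially copies of the corresponding arguments for Theorem~\ref{principal3}.
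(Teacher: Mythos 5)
Your Steps 1 and 2 do track the paper's scheme (folding to the torus in $x$, compactness/positivity of the two-state kinetic semi-group in a Gaussian-weighted $L^{2}$ space, simplicity of the peripheral eigenvalue, Kato analyticity of the invariant density and hence of $I$ in $(F_{1},F_{2})$), and your corrector formula for $\dot J$ is formally correct. But the corrector/gradient machinery of Steps 2--3 is ultimately redundant: once $I$ is real analytic on the connected space of zero-average $C^{1}$ pairs, openness plus density follows exactly as in the paper provided one exhibits a single pair with $I\neq 0$. That single nonvanishing point is the entire difficulty, and your proposal does not supply it --- you explicitly defer it as the ``main obstacle''. In the paper this is the content of the whole appendix: the expansion $I=\sum_{n}\epsilon^{n}I_{n}$ has $I_{1}\equiv 0$ \emph{and} $I_{2}\equiv 0$ identically (as for model (\ref{modele2})), so one must compute the cubic term $I_{3}$, which is done through a lengthy Hermite--Laguerre expansion and then checked to be nonzero by a symbolic/numerical evaluation for specific trigonometric polynomials ($L=10$, $\nu_{1}=1$, $\nu_{2}=2$). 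So the ``first non-trivial order'' you invoke is the third, and verifying its nonvanishing is a genuinely heavy computation, not a remark.

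Worse, the specific witness you propose cannot work. Take $F_{2}=-F_{1}$ with $F_{1}$ odd (your $F_{1}(x)=\varepsilon\big(\sin(2\pi x/L)+\tfrac12\sin(4\pi x/L)\big)$ is odd). The map $(v,x)\mapsto(-v,-x)$ sends a stationary solution of (\ref{kramers2}) with forces $(F_{1},F_{2})$ and rates $(\nu_{1},\nu_{2})$ to one with forces $(-F_{1}(-\cdot),-F_{2}(-\cdot))$ and the same rates; composing with the swap of the two states, and using $-F_{2}(-x)=F_{2}(x)$, $-F_{1}(-x)=F_{1}(x)$ in this odd, antisymmetric case, one lands back on the original system up to relabelling, while the total current changes sign. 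Hence $I(\gamma,F_{1},-F_{1},\nu_{1},\nu_{2})=0$ exactly, for \emph{all} $\nu_{1},\nu_{2}>0$ and all $\varepsilon$, so every order of your perturbative expansion vanishes for this candidate. This illustrates why the choice of trial forces is delicate (the paper's analogous computation for model (\ref{modele2}) uses $b_{1}=\cos 2x$, $b_{2}=\cos x$, and even there the cubic coefficient vanishes on a curve in the $(\nu_{1},\nu_{2})$ plane). As it stands, the proposal establishes independence of the initial condition, analyticity and openness, but the density claim rests on an unproved --- and, for your chosen example, false --- nonvanishing statement.
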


\section{Elimination  of the spatial average.}

Before we start with the proof of the Theorems, we first show that the
result does not depend on the spatial average of the drift $b$.

\begin{proposition}\label{moyspa}
Assume $b$ has space time average zero, namely
$$
\frac{1}{TL}\int_{0}^{T}\int_{0}^{L} b(t,x) \,dt\,dx=0\;.
$$
 Then the drift  $\tilde b$ given by
$$
\tilde b(t,x)=b(t,x+a(t))-\frac{1}{L}\int_{0}^{L}b(t,y)dy
$$
where
$$
a(t)=-\frac{1}{L}\int_{0}^{t}ds\int_{0}^{L}b(s,y)dy\;.
$$
is periodic of period $T$ in time and of period $L$ in x. This drift
has zero space average and 
leads to the same asymptotic displacement per unit time.
\end{proposition}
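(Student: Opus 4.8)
The plan is to verify the three assertions of the Proposition in turn: that $\tilde b$ has the claimed periodicities and zero space average; that the ``moving frame'' change of unknown turns a solution of (\ref{modele1}) with drift $b$ into a solution with drift $\tilde b$; and that the displacements $\int x u\,dx$ and $\int x \tilde u\,dx$ differ by a bounded quantity, so that their time averages share the same limit.

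First I would treat $\tilde b$ itself. Periodicity in $x$ of period $L$ is immediate since $b(t,\cdot)$ is $L$-periodic. For periodicity in $t$, since $b(\cdot+T,\cdot)=b$ and $t\mapsto\frac1L\int_0^L b(t,y)\,dy$ is $T$-periodic, it is enough to show $a(t+T)=a(t)$. Now $a(t+T)-a(t)=-\frac1L\int_t^{t+T}\big(\int_0^L b(s,y)\,dy\big)\,ds$, and $T$-periodicity of the $s$-integrand allows replacing the $s$-integral by $\int_0^T$, which vanishes precisely by the space-time zero-average hypothesis $\int_0^T\int_0^L b=0$. Hence $a$ is $T$-periodic (thus bounded and, like $b$, of class $C^1$), and $\tilde b$ is $T$-periodic in $t$. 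The zero space average of $\tilde b$ then follows from the substitution $y=x+a(t)$ and $L$-periodicity of $b(t,\cdot)$, which give $\int_0^L b(t,x+a(t))\,dx=\int_0^L b(t,y)\,dy$, exactly cancelling the subtracted term.

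Next, let $u$ solve (\ref{modele1}) with drift $b$ and the prescribed initial data, and set $\tilde u(t,x)=u(t,x+a(t))$. Writing $y=x+a(t)$, the chain rule gives $\partial_t\tilde u(t,x)=(\partial_t u)(t,y)+a'(t)(\partial_x u)(t,y)$ and $\partial_x^k\tilde u(t,x)=(\partial_x^k u)(t,y)$; substituting (\ref{modele1}) for $\partial_t u$ and observing that $a'(t)=-\frac1L\int_0^L b(t,y)\,dy$ does not depend on $x$, so that $a'(t)\,\partial_x\tilde u=\partial_x\big(a'(t)\,\tilde u\big)$, one obtains after collecting terms $\partial_t\tilde u=\partial_x\big(\tfrac12\partial_x\tilde u+\tilde b\,\tilde u\big)$, i.e. $\tilde u$ solves (\ref{modele1}) with drift $\tilde b$. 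Since $a(0)=0$ we have $\tilde u(0,\cdot)=u(0,\cdot)$, so $\tilde u$ is non-negative, of integral one, and has $|x|\tilde u(0,x)$ integrable.

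Finally, mass conservation gives $\int\tilde u(t,x)\,dx=\int u(t,x)\,dx=1$, and the same substitution yields $\int x\,\tilde u(t,x)\,dx=\int (y-a(t))\,u(t,y)\,dy=\int y\,u(t,y)\,dy-a(t)$. Dividing by $t$ and letting $t\to\infty$, the term $a(t)/t$ tends to $0$ because $a$ is bounded; hence $\frac1t\int x\,\tilde u\,dx$ converges if and only if $\frac1t\int x\,u\,dx$ does, to the same limit, which is exactly the statement that $\tilde b$ leads to the same asymptotic displacement per unit time as $b$. The only step where the hypothesis is genuinely used is the vanishing of $a(t+T)-a(t)$; the rest is bookkeeping, the longest item being the chain-rule identity, which I expect to be routine given that $b\in C^1$ provides classical solutions with enough spatial decay for $\int|x|u\,dx$ to be finite and for the change of variables to be justified.
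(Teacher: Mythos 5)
Your argument is correct and follows essentially the same route as the paper's own proof: the $T$-periodicity of $a$ from the zero space-time average, the change of unknown $u(t,x+a(t))$ yielding the equation with drift $\tilde b$, and the boundedness of $a$ making the shift in $\int x\,u\,dx$ irrelevant after dividing by $t$. You merely spell out the chain-rule computation and the zero space average of $\tilde b$ that the paper leaves as ``an easy computation.''
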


\begin{proof}
Note first that since the space time average of $b$ is zero, the
function $a$ is periodic of period $T$. Let $u$ be a solution of
(\ref{modele1}), and define the function
$$
v(t,x)=u(t,x+a(t))\;.
$$
An easy computation leads to
$$
\partial_{t}v=\partial_{x}\left(\frac{1}{2}\partial_{x}v+\tilde
b(t,x)v\right)\;. 
$$
Since $a(t)$ is periodic and bounded we have by a simple change of
variable
$$
\lim_{t\to\infty}\frac{1}{t}\int x\,u(t,x)dx=
\lim_{t\to\infty}\frac{1}{t}\int x\,v(t,x)dx\;.
$$
\end{proof}

\section{Proof of Theorems \ref{principal1}}

We start by giving a more convenient expression for the asymptotic
velocity $I(b)$.  Using (\ref{ib}) and (\ref{modele1}), and
integrating by parts we get
$$
I(b)=\lim_{t\to\infty}\frac{1}{t}\int_{0}^{t}ds \int x\, 
\partial_{s}u(s,x) dx
=-\lim_{t\to\infty}\frac{1}{t}\int_{0}^{t}ds \int b(s,x)\,u(s,x)dx\;.
$$
Since $b$ is periodic in $x$, of period $L$, we can write
$$
\int b(s,x)\,u(s,x)dx=\int_{0}^{L}  b(s,x)\,\uper(s,x)dx
$$
where
$$
\uper(s,x)=\sum_{n}u(s,x+nL)
$$
is a periodic function of $x$ of period $L$. Note that since $b$ is
periodic of period $L$, $\uper$ satisfies also equation
(\ref{modele1}).
We now have
\begin{equation}\label{vas}
I(b)=-\lim_{t\to\infty}\frac{1}{t}\int_{0}^{t}ds \int_{0}^{L}
 b(s,x)\,\uper(s,x)dx\;.
\end{equation}
Since the system is non autonomous, although periodic in time, we can
only expect that when $t$ tends to infinity, the function $\uper(t,x)$
tends to a periodic function $w_{b}(t,x)$ of $t$ and $x$.  Let $w_{b}$
be the solution of equation (\ref{modele1}) periodic in space and time
and with an integral (over $[0,L]$) equal to one.  It can be expected
(see \cite{reimann}, \cite{garnier}, \cite{molchanov}) that the
asymptotic average displacement is given by
\begin{equation}\label{connu}
I(b)=-\frac{1}{T}\int_{0}^{T}\int_{0}^{L}b(t,x)w_{b}(t,x)dt dx\;.
\end{equation}

In order to give a rigorous proof of existence of the function
$w_{b}(t,x)$ and of the above relation, we introduce a new time. We
consider the operator $\gen$ given by
$$
\gen\,w=
-\partial_{s}w+\partial_{x}\left(\frac{1}{2}\partial_{x}w+b(s,x)w\right)\;,
$$ 
acting in a suitable domain dense in the space $\lunp$ of integrable
functions which are periodic in $s$ and $x$ of periods $T$ and $L$
respectively .  This operator is the generator of the diffusion on the
two dimensional torus ($[0,T]\times [0,L]$ with the suitable
identifications) associated to the stochastic differential equation
\begin{equation}\label{diff1}
\left\{
\begin{array}{ll}
ds&=dt\\
dx&=-b(s,x)\,dt+dW_{t}
\end{array}
\right.
\end{equation}
where $W_{t}$ is the standard Brownian motion (see \cite{iw}).
We can now establish the following result.

\begin{proposition}\label{deuxtemps}
The diffusion (\ref{diff1}) has a unique invariant probability measure
with density $w_{b}(s,x)$. This function is strictly positive. It is
periodic of period $T$ in $s$ and of period $L$ in $x$ and satisfies
equation (\ref{modele1}), and it is the only such solution. The semi
group with generator $\gen$ associated to the diffusion (\ref{diff1})
is compact and strongly continuous. The peripheral spectrum of its
generator is composed of the simple eigenvalue zero (with eigenvector
$w_{b}$). In particular, for any function $v\in\lunp$, we have in the
topology of $\lunp$
$$
\lim_{\tau\to\infty} e^{\tau \gen}v=w_{b} \int_{0}^{T}\int_{0}^{L}
v(s,x)\,ds\,dx\;. 
$$
\end{proposition}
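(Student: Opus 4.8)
The plan is to reduce the whole statement to a Krein--Rutman analysis of the one-period (Floquet, or Poincar\'e) operator of the underlying uniformly parabolic Fokker--Planck equation on the spatial circle. Because in (\ref{diff1}) the first coordinate $s$ moves deterministically, the torus diffusion, restricted to the fibres $\{s=\mathrm{const}\}$, is governed by the non-autonomous but $T$-periodic, divergence-form, uniformly parabolic equation $\partial_{s}g=\partial_{x}\big(\tfrac12\partial_{x}g+b(s,x)g\big)$ on the circle $\reel/L\reel$. Let $U(s',s)$ ($s'\ge s$) denote its propagator and $P:=U(T,0)$ the monodromy operator on $\lunpx$. The monodromy operators $P_{c}=U(c+T,c)$ for different base points $c$ factor as $P_{c}=B_{c}C_{c}$ and $P_{0}=C_{c}B_{c}$ with $B_{c}=U(c,0)$, $C_{c}=U(T,c)$ and $\|B_{c}\|=\|C_{c}\|=1$, so they share the nonzero part of their spectrum and it suffices to analyse $P$. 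I will (i) establish the needed properties of $P$, (ii) build $w_{b}$ from its Perron eigenfunction, and (iii) transfer everything to the semigroup $e^{\tau\gen}$ by the method of characteristics for the transport term $-\partial_{s}$.

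Step (i), which I expect to be the main obstacle, is to show that $P$ is compact, positive, mass preserving, and has a strictly positive kernel. Positivity and mass preservation, $\int_{0}^{L}Pg\,dx=\int_{0}^{L}g\,dx$, follow from the divergence form and the maximum principle; hence $P$ maps probability densities to probability densities, $\|P\|_{\lunpx\to\lunpx}=1$ and $P^{*}\mathbf{1}=\mathbf{1}$. Compactness is the genuine analytic input: by classical parabolic regularity for divergence-form operators with $C^{1}$ coefficients, the propagator over the positive time-lapse $T$ has a jointly continuous kernel, so it sends the unit ball of $\lunpx$ to a bounded equicontinuous family, relatively compact in $C^{0}$ and hence in $\lunpx$ (Arzel\`a--Ascoli; equivalently an $L^{1}\to H^{1}$ smoothing bound and Rellich's theorem). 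The same regularity together with an Aronson-type Gaussian lower bound gives strict positivity of the kernel, whence $P$ is irreducible and aperiodic. One should keep the parabolic constants uniform in the base point $c$, as this is used in step (iii).

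Granting step (i), the rest is bookkeeping. From $\|P\|=1$ and $P^{*}\mathbf{1}=\mathbf{1}$ one gets $r(P)=1$; Krein--Rutman for compact positive operators makes $1$ an eigenvalue with a non-negative eigenfunction $h_{0}$, irreducibility makes $h_{0}$ strictly positive, the eigenspace one-dimensional and $1$ algebraically simple, and aperiodicity excludes other peripheral eigenvalues, so $\sigma(P)\setminus\{1\}\subset\overline{D(0,\rho)}$ for some $\rho<1$. Normalise $\int_{0}^{L}h_{0}\,dx=1$ and define $w_{b}(s,\cdot):=U(s,0)h_{0}$ on $[0,T)$, extended $T$-periodically (with the overall constant fixed so that $w_{b}$ is a probability density on the torus). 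Then $\gen w_{b}=0$, which is exactly equation (\ref{modele1}); $w_{b}$ is $T$-periodic in $s$ since $w_{b}(T,\cdot)=Ph_{0}=h_{0}=w_{b}(0,\cdot)$, and within a period $w_{b}(s,\cdot)$ is the Perron eigenfunction of $P_{s}$ because $P_{s}U(s,0)h_{0}=U(s+T,0)h_{0}=U(s,0)Ph_{0}=U(s,0)h_{0}$; it is $L$-periodic and strictly positive because the propagators preserve $L$-periodicity and strictly improve positivity; and $\int_{0}^{L}w_{b}(s,\cdot)\,dx$ is $s$-independent by integrating the equation and using periodicity in $x$. Thus $w_{b}$ is the density of an invariant probability measure of (\ref{diff1}); any invariant probability density, and any space--time periodic solution of (\ref{modele1}), is on the fibre $\{s=0\}$ a fixed point of $P$, hence a multiple of $h_{0}$, which gives uniqueness.

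Step (iii): $e^{\tau\gen}$ is a positive, mass-preserving $C_{0}$-semigroup on $\lunp$, and its strong continuity is routine from the characteristics representation $e^{\tau\gen}v(s,\cdot)=U(s,s-\tau)\,v(s-\tau,\cdot)$ (with $U$ now the propagator of the $T$-periodic family over the possibly long real interval $[s-\tau,s]$ and $v$ read off $T$-periodically) together with strong continuity of $s$-translation on $L^{1}$. Factoring $U(s,s-\tau)$ through the monodromy, $e^{\tau\gen}v(s,\cdot)=P_{s}^{\,n}\,U(s,s-r)\,v(s-r,\cdot)$ for $\tau=nT+r$, $0\le r<T$; since $P_{s}^{\,n}=B_{s}P_{0}^{\,n-1}C_{s}$ with $\|B_{s}\|=\|C_{s}\|=1$ and $B_{s},C_{s}$ depending continuously on $s\in[0,T]$, the compactness, the spectral gap, and the rank-one spectral projection of $P$ pass over to $e^{\tau\gen}$ for $\tau\ge T$, uniformly in $s$. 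Hence $e^{\tau\gen}$ is compact, the peripheral spectrum of $\gen$ is the simple eigenvalue $0$ with eigenvector $w_{b}$, and, using that the left Perron eigenfunctional of each $P_{s}$ is $g\mapsto\int_{0}^{L}g\,dx$ by mass preservation, $e^{\tau\gen}v\to w_{b}\int_{0}^{T}\!\int_{0}^{L}v\,ds\,dx$ in $\lunp$ with geometric rate $\rho^{\lfloor\tau/T\rfloor}$. An alternative to Krein--Rutman is a Doeblin/Harris argument applied directly to the strictly positive one-period kernel; either way the only substantial ingredient is the parabolic smoothing-and-positivity estimate of step (i).
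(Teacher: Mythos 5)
The paper offers no proof of this proposition at all --- it simply refers to \cite{luc} --- so your self-contained Floquet/Krein--Rutman argument is by construction a different route, and its first two steps are sound and genuinely useful: the monodromy operator $P=U(T,0)$ of the uniformly parabolic fibre equation is compact, positive, mass-preserving, with strictly positive kernel, and Krein--Rutman then delivers the strictly positive, normalised, space--time periodic $w_{b}$, its uniqueness, and the fibre-wise exponential convergence $P_{s}^{\,n}g\to h_{s}\int_{0}^{L}g\,dy$ with $h_{s}=Tw_{b}(s,\cdot)$. That is exactly the input the paper uses later (Proposition \ref{asympt1} only ever invokes $\tau=nT$ and data with $\int_{0}^{L}v(s,y)\,dy\equiv1$).

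Step (iii), however, contains a genuine error: compactness and the rank-one spectral projection of $P$ do \emph{not} ``pass over'' to $e^{\tau\gen}$ on $\lunp$, because the $s$-coordinate moves deterministically and the semigroup never smooths, mixes or contracts in $s$. Concretely, take $b=0$, so $w_{b}=1/(LT)$, and $v(s,x)=(1+\cos(2\pi s/T))/L$: then $e^{\tau\gen}v(s,x)=(1+\cos(2\pi(s-\tau)/T))/L$ has no limit as $\tau\to\infty$ and stays a fixed $L^{1}$-distance from the constant; the sequence $v_{n}(s,x)=e^{2\pi i n s/T}$ shows $e^{\tau\gen}$ is not compact; and $e^{2\pi i p s/T}w_{b}(s,x)$ is an eigenfunction of $\gen$ with eigenvalue $-2\pi i p/T$, so the peripheral spectrum of the generator is all of $(2\pi i/T)\mathbf{Z}$, not $\{0\}$. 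The precise point where your argument breaks is the claim that the fibre-wise spectral projections transfer: for $\tau=nT+r$ one has $e^{\tau\gen}v(s,\cdot)=P_{s}^{\,n}U(s,s-r)v(s-r,\cdot)\to Tw_{b}(s,\cdot)\int_{0}^{L}v(s-\tau,y)\,dy$, and the dependence on $\tau$ through the fibre mass $\int_{0}^{L}v(\cdot,y)\,dy$ never disappears; that limiting operator is rank one on each fibre but not on $\lunp$. To be fair, these defects are already present in the proposition as stated. What your steps (i)--(ii) actually prove --- and what should replace the last three assertions --- is the convergence $e^{nT\gen}v(s,\cdot)\to Tw_{b}(s,\cdot)\int_{0}^{L}v(s,y)\,dy$ in $\lunpx$, geometrically fast and uniformly in $s$, which yields the displayed limit exactly when $\int_{0}^{L}v(s,y)\,dy$ is constant in $s$, as it is in every subsequent application. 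You should state and prove that corrected version rather than attempt the transfer as written.
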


This kind of results is well known, we refer to \cite{luc} for an
exposition and further references. 
We can now establish the relation between  (\ref{vas}), and (\ref{connu}). 

\begin{proposition}\label{asympt1}
Let $v_{0}\ge0$ be a periodic function of period $L$ in $x$ of
integral one. Denote by $v(t,x)$ the solution of (\ref{modele1}) which
is periodic of period $L$ in $x$ with initial condition $v_{0}$. Then
$$
\lim_{t\to\infty}\frac{1}{t}\int_{0}^{t}ds \int_{0}^{L}
 b(s,x)\,v(s,x)dx=
-\frac{1}{T}\int_{0}^{T}\int_{0}^{L}b(t,x)w_{b}(t,x)dt dx\;.
$$
\end{proposition}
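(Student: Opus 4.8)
The plan is to reduce the assertion to a convergence--to--equilibrium statement for the one--period (monodromy) operator of the time--periodic parabolic equation. This is convenient because that operator is a genuinely one--dimensional object, so we sidestep the need to follow the $\lunp$--convergence of Proposition \ref{deuxtemps} along the \emph{moving} phase $t\bmod T$, which is the only delicate point.

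First I set up the forward evolution. For $0\le a\le t$ let $P_{a,t}$ be the propagator of (\ref{modele1}) on the circle of circumference $L$, so that $v(t,\cdot)=P_{0,t}v_{0}$; integrating the equation over $x$ shows $P_{a,t}$ preserves the integral over $[0,L]$, and the maximum principle that it preserves positivity, so it is an $\lunpx$--contraction. Since $b$ has period $T$ in time, $P_{a+T,t+T}=P_{a,t}$; hence, writing $t=kT+\sigma$ with $\sigma\in[0,T)$, one gets $v(t,\cdot)=P_{0,\sigma}\,Q^{k}v_{0}$ where $Q:=P_{0,T}$ acts on $\lunpx$.

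The core is the spectral analysis of $Q$. It is compact (parabolic regularisation over a time $T>0$), positivity preserving, integral preserving, and has a strictly positive kernel (strong maximum principle on the connected circle). By the Perron--Frobenius/Krein--Rutman theory for such operators --- the same circle of ideas that underlies Proposition \ref{deuxtemps}, see \cite{luc} --- its spectral radius $1$ is a simple eigenvalue with a strictly positive eigenfunction $\varphi$, normalised so that $\int_{0}^{L}\varphi=1$, the rest of the spectrum lying in a disc of radius strictly less than $1$; moreover the spectral projection onto the eigenvalue $1$ is $g\mapsto\big(\int_{0}^{L}g\big)\varphi$, because $Q$ preserves the integral and so its adjoint fixes the constant function. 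Hence $\|Q^{k}v_{0}-\varphi\|_{\lunpx}\to0$ geometrically. I then identify $\varphi$ with $w_{b}$: the curve $t\mapsto P_{0,t}\varphi$ solves (\ref{modele1}), is $L$--periodic in $x$, and is $T$--periodic in $t$ (since $P_{0,T}\varphi=\varphi$ and $b$ is $T$--periodic), with unit integral over $[0,L]$; by the uniqueness part of Proposition \ref{deuxtemps} it is $w_{b}$ (up to the normalisation chosen there), so $P_{0,\sigma}\varphi=w_{b}(\sigma,\cdot)$. Using once more that each $P_{0,\sigma}$ is an $\lunpx$--contraction, $\sup_{\sigma\in[0,T]}\|P_{0,\sigma}(Q^{k}v_{0}-\varphi)\|_{\lunpx}\le\|Q^{k}v_{0}-\varphi\|_{\lunpx}\to0$, that is $\|v(t,\cdot)-w_{b}(t\bmod T,\cdot)\|_{\lunpx}\to0$ as $t\to\infty$, uniformly in $t\bmod T$.

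It remains to pass to the time average. Since $b$ is bounded,
$$
\Big|\int_{0}^{L}b(s,x)\,v(s,x)\,dx-\int_{0}^{L}b(s,x)\,w_{b}(s\bmod T,x)\,dx\Big|
\le\|b\|_{\infty}\,\|v(s,\cdot)-w_{b}(s\bmod T,\cdot)\|_{\lunpx}\longrightarrow0
$$
as $s\to\infty$, and the second integral equals $\int_{0}^{L}b(s\bmod T,x)w_{b}(s\bmod T,x)\,dx$, a $T$--periodic function of $s$ whose time average is $\frac1T\int_{0}^{T}\int_{0}^{L}b\,w_{b}$; writing $t=NT+r$ with $0\le r<T$, the tail $\frac1t\int_{NT}^{t}\!\int_{0}^{L}b\,v\,dx\,ds$ is $O(1/t)$ because $b$ is bounded and $\int_{0}^{L}v(s,\cdot)\,dx=1$. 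Combining these yields the right-hand member of the Proposition. The one real obstacle in this scheme is the displayed inequality: what controls $I(b)$ is the spatial density on the slice indexed by the moving phase $t\bmod T$, and joint $L^{1}(ds\,dx)$--convergence does not by itself control a slice that drifts with the parameter; factoring the evolution through the one--dimensional operator $Q$ and exploiting the uniform contractivity of $P_{0,\sigma}$ in $\sigma$ is precisely what repairs this, and the rest is soft.
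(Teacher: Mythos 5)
Your argument is correct, but it takes a genuinely different route from the paper's. The paper stays inside the space--time formalism: taking $w(s,x)=v(s,x)$ on one period as an element of $\lunp$, it identifies $v(t,\cdot)$ with the slice at $s=t-[t/T]T$ of $e^{[t/T]T\gen}w$ and then applies Proposition \ref{deuxtemps} directly. Note that no slice-wise control is actually needed there: the quantity in the Proposition is a time average over full periods, i.e.\ exactly an integral over the torus of the bounded function $b$ against $e^{nT\gen}w$, so the $\lunp$-convergence of Proposition \ref{deuxtemps} plus a Ces\`aro argument suffices --- the ``moving phase'' difficulty you set out to avoid is not really an obstacle in that computation. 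You instead factor the evolution through the monodromy operator $Q=P_{0,T}$ acting on $\lunpx$, rerun the Perron--Frobenius/Krein--Rutman analysis for $Q$ (compact, positivity improving, integral preserving, hence spectral radius one with a simple peripheral eigenvalue and rank-one projection $g\mapsto(\int g)\varphi$), identify $P_{0,\sigma}\varphi$ with $w_{b}(\sigma,\cdot)$ via the uniqueness clause of Proposition \ref{deuxtemps}, and transfer the spectral-gap convergence to all intermediate times by the uniform $L^{1}$-contractivity of $P_{0,\sigma}$. This buys a stronger conclusion than the paper needs (geometric, uniform-in-phase convergence $\|v(t,\cdot)-w_{b}(t\bmod T,\cdot)\|_{\lunpx}\to0$), at the cost of re-proving for $Q$ the spectral facts the paper simply imports from Proposition \ref{deuxtemps}; both proofs ultimately rest on the same cited circle of ideas. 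One small point: what both arguments in fact establish is the identity without the minus sign on the right-hand side (as is consistent with combining (\ref{vas}) and (\ref{connu})); the minus sign in the statement is evidently a slip in the paper, and your ``right-hand member of the Proposition'' silently refers to the corrected version.
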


\begin{proof}

In order to apply Proposition (\ref{deuxtemps}), we consider the operator
$\gen_{0}$ given by
$$
\gen_{0}=\partial_{x}\left(\frac{1}{2}\partial_{x}u+b(s,x)u\right)\;,
$$
and observe that if $w\in \lunp$, we have
\begin{equation}\label{letruc}
\left(e^{\tau \gen}w\right)(s,x)=\left(e^{\tau
  \gen_{0}}w(s-\tau,\,\cdot\,)\right)(x)\;,
\end{equation}
and in particular for any integer $n$, we get
$$
\left(e^{nT\gen}w\right)(s,x)=\left(e^{nT
  \gen_{0}}w(s,\,\cdot\,)\right)(x)\;,
$$
since $w$ is of period $T$ in $s$.

We now take for $w$ the function $w(s,x)=v(s,x)$ for $0\le
s<T$. Although this $w$ may have a jump at $s=T$, we can consider it
as a function in $\lunp$.  We observe  that if $W(\tau,s,x)$ is a
solution of
$$
\partial_{\tau}W=
-\partial_{s}W+\partial_{x}\left(\frac{1}{2}\partial_{x}W+b(s,x)W\right)\;,
$$
then for each fixed $s_{0}$, the function
$h_{s_{0}}(\tau,x)=W(\tau,s_{0}+\tau,x)$ is a solution of
$$
\partial_{\tau}h_{s_{0}}=
\partial_{x}\left(\frac{1}{2}\partial_{x}h_{s_{0}}
+b(s_{0}+\tau,x)h_{s_{0}}\right)\;.
$$
Therefore, by the uniqueness of the
solution of (\ref{modele1}), we have  for any $t\ge 0$ (taking
$s_{0}=t-T[t/T]$)
$$
v(t,x)=\left( e^{[t/L]T\gen}w\right)(t-[t/L]T,x)\;.
$$
The proposition follows by applying Proposition \ref{deuxtemps}.

\end{proof}
The following proposition is the other main step in the proof of Theorem 
\ref{principal1}.

\begin{proposition}\label{analytique}
The function $b\mapsto I(b)$ is (real) analytic in the Banach space $C^{1}$.
\end{proposition}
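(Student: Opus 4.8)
The starting point is the identity (\ref{connu}) for $I(b)$, which now follows rigorously from (\ref{vas}) and Proposition \ref{asympt1}. The bilinear map $C^{1}\times\lunp\to\reel$ sending $(b,w)$ to $\int_{0}^{T}\int_{0}^{L}b(t,x)\,w(t,x)\,dt\,dx$ is continuous, because $C^{1}$ embeds continuously into $L^{\infty}_{\mathrm{per}}$, hence it is real analytic; since a composition of real analytic maps is real analytic, it suffices to prove that $b\mapsto w_{b}$ is a real analytic map from a neighbourhood of any $b_{0}\in C^{1}$ into $\lunp$.

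To establish this I would use analytic perturbation theory for the isolated eigenvalue $0$ of the generator $\gen_{b}$. Write $\gen_{b}=\gen_{b_{0}}+B_{b-b_{0}}$, where $B_{c}w=\partial_{x}(cw)=c\,\partial_{x}w+(\partial_{x}c)\,w$. The operator $B_{c}$ maps the domain $\mathscr{D}$ of $\gen_{b_{0}}$ (equipped with its graph norm) boundedly into $\lunp$ with operator norm bounded by a constant times $\|c\|_{C^{1}}$, and, using the interpolation estimate $\|\partial_{x}w\|_{\lunp}\lesssim\|w\|_{\lunp}^{1/2}\|\partial_{x}^{2}w\|_{\lunp}^{1/2}$, one sees that $B_{c}$ is $\gen_{b_{0}}$-bounded with arbitrarily small relative bound. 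Hence all the operators $\gen_{b}$ share the common domain $\mathscr{D}$ and $\{\gen_{b}\}$ is a holomorphic family of type (A) in the sense of Kato; in particular, for $z$ on a fixed small circle $\Gamma$ around $0$, the resolvent $(\gen_{b}-z)^{-1}$ is, for $b$ close to $b_{0}$, the sum of a Neumann series that converges uniformly on $\Gamma$ and depends analytically on $b$.

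By Proposition \ref{deuxtemps}, $0$ is a simple eigenvalue of $\gen_{b_{0}}$, and it is isolated since the compactness of the semigroup makes the rest of the spectrum discrete; so $\Gamma$ may be chosen to enclose $0$ alone. The Riesz projection $P(b)=-\frac{1}{2\pi i}\oint_{\Gamma}(\gen_{b}-z)^{-1}\,dz$ is then analytic in $b$ and of rank one, its rank being locally constant and equal to its value at $b_{0}$. For real $b$ its range is $\mathrm{span}(w_{b})$; since $\gen_{b}$ preserves mass we have $\gen_{b}^{*}\mathbf{1}=0$, so the left eigenfunction at $0$ is the constant $\mathbf{1}$, and as $\int_{0}^{T}\int_{0}^{L}w_{b}\,dt\,dx=1$ this gives the explicit formula $P(b)f=w_{b}\int_{0}^{T}\int_{0}^{L}f\,dt\,dx$. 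Taking $f\equiv(TL)^{-1}$ yields $w_{b}=P(b)f$, which is therefore real analytic in $b$ with values in $\lunp$ (in fact in $\mathscr{D}$). Together with the first paragraph this proves the proposition. Alternatively, one may derive the analyticity of $b\mapsto w_{b}$ from the analytic implicit function theorem applied to the system $\gen_{b}w=0$, $\int_{0}^{T}\int_{0}^{L}w\,dt\,dx=1$: after splitting off the one-dimensional kernel of $\gen_{b_{0}}$, the component of the equation along this kernel is automatically satisfied because $\int_{0}^{L}\partial_{x}(cw)\,dx=0$, so only the invertible part of $\gen_{b_{0}}$ has to be inverted.

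The step I expect to be the real obstacle is the second paragraph: verifying that $\{\gen_{b}\}$ genuinely is a holomorphic family of type (A) on $L^{1}_{\mathrm{per}}$, i.e. proving the relative-boundedness estimate for $B_{c}$ and the uniform resolvent bound along $\Gamma$ in this $L^{1}$ framework. Once that is in hand, the remaining points — local constancy of the rank of $P(b)$, the identification of $P(b)$, and the final composition — are routine.
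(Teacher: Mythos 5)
Your overall strategy is the one the paper follows: reduce to analyticity of $b\mapsto w_{b}$ via (\ref{connu}), view $\gen_{b}$ as a holomorphic family of type (A) obtained from a relatively bounded perturbation $w\mapsto\partial_{x}\big((b-b_{0})w\big)$, and conclude from the simplicity and isolation of the eigenvalue $0$ (Proposition \ref{deuxtemps}) by analytic perturbation theory, the Riesz projection and the continuous pairing with $b$. The steps you call routine are indeed the ones the paper dispatches in a sentence. The genuine gap is exactly the point you flag yourself: the justification of the relative bound. The interpolation inequality $\|\partial_{x}w\|_{\lunp}\lesssim\|w\|_{\lunp}^{1/2}\|\partial_{x}^{2}w\|_{\lunp}^{1/2}$ only helps if you can dominate $\|\partial_{x}^{2}w\|_{\lunp}$ by the graph norm of $\gen_{b_{0}}$ (equivalently of $\tilde\gen=-\partial_{s}+\tfrac12\partial_{x}^{2}$). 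That is an $L^{1}$ maximal-regularity statement for a degenerate parabolic operator: nothing guarantees that the domain of $\tilde\gen$ in $\lunp$ consists of functions with $\partial_{x}^{2}w\in L^{1}$, and endpoint $L^{1}$ regularity of this kind is precisely what tends to fail. As written, the type (A) verification is therefore not established.

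The paper proves the relative bound without ever invoking $\partial_{x}^{2}$: from the explicit heat kernel $g_{\tau}$ on the circle it shows $\sup_{y}\int_{0}^{L}\big|\partial_{x}g_{\tau}(x,y)\big|\,dx\le C\tau^{-1/2}$, hence $\big\|\partial_{x}e^{\tau\tilde\gen}\big\|_{\lunp\to\lunp}\le C\tau^{-1/2}$, and after a Laplace transform $\big\|\partial_{x}R_{\lambda}\big\|\le C\lambda^{-1/2}$ for all $\lambda>0$, which gives relative bound zero for $b\,\partial_{x}$ (the zeroth-order term $(\partial_{x}b)\,\cdot$ being trivially bounded). Since the perturbation is linear in $b$ and relatively bounded with bound zero with respect to the fixed operator $\tilde\gen$, the family is of type (A) around every $b_{0}$ simultaneously, so your reduction to boundedness relative to $\gen_{b_{0}}$ is not even needed. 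If you replace your interpolation step by this semigroup smoothing (or resolvent-kernel) estimate, the remainder of your argument --- isolation of $0$ via compactness of the semigroup, analyticity and constancy of rank of the Riesz projection, the identification $P(b)f=w_{b}\int_{0}^{T}\int_{0}^{L}f\,ds\,dx$ from mass conservation, and the final composition with the continuous bilinear form --- goes through and coincides with the paper's proof.
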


By this we mean (see \cite{mujica})
 that the function is $C^{\infty}$, and around
any point $b\in C^{1}$  there is a small ball where the Taylor series
converges to the function.

\begin{proof}
We will establish that the map $b\mapsto w_{b}$ is real analytic in
$\lunp$. For this purpose, we first establish that the operator $A$
defined by
$$
Av=\partial_{x} (bv)=b\,\partial_{x}v+\partial_{x}b\, v 
$$ is relatively
bounded with respect to
$$
\tilde\gen=-\partial_{s}+\frac{1}{2}\partial_{x}^{2}\;,
$$
and with relative bound zero (see \cite{kato} for the definition).
 This is obvious for the operator of
multiplication by $\partial_{x}b$ which is bounded, and since $b$ is
bounded it is enough to derive the result for the operator
$\partial_{x}$. We will show that there is a constant $C>0$ such that
for any $\lambda>0$,
$\big\|\partial_{x}R_{\lambda}\big\|_{\lunp}<C\lambda^{-1/2}$, where
$R_{\lambda}$ is the resolvent of $\tilde\gen$.
In other words, we will show that for  any $\lambda>0$
\begin{equation}\label{relat}
\left\|\partial_{x}\int_{0}^{\infty}e^{-\lambda\tau}e^{\tau \tilde\gen}d\tau
\right\|_{\lunp}<\frac{C}{\sqrt \lambda}\;.
\end{equation}

Analogously to formula (\ref{letruc}) we have for any $w\in\lunp$
$$
\partial_{x}
\left(e^{\tau \gen'}w\right)(s,x)=
\int_{0}^{L} \partial_{x} g_{\tau}(x,y)\,w(s-\tau,y)\,dy\;,
$$
where $g_{\tau}(x,y)$ is the heat kernel on the circle of length
$L$. We now observe that if $n$ is an integer with
$|n|\ge 2$, we have (since $x\in[0,L]$) 
$$
\sup_{y\in[0,L]}\int_{0}^{L} 
\frac{\big|x-y-nL\big|}{\tau^{3/2}}\;
 e^{-(x-y-nL)^{2}/(2\tau)}dx\le\Oun\frac{e^{-n^{2}/(4\tau)}}{\tau}\;.
$$
From the explicit expression
$$
g_{\tau}(x,y)=\sum_{n}\frac{1}{\sqrt{2\,\pi\,\tau}}\; e^{-(x-y-nL)^{2}/(2\tau)}
$$
it follows easily that 
$$
\sup_{y\in[0,L]}\int_{0}^{L} \Big| \partial_{x}
g_{\tau}(x,y)\Big|dx
\le \sum_{|n|\le 1} \sup_{y\in[0,L]}\int_{0}^{L} 
\frac{\big|x-y-nL\big|}{\tau^{3/2}\sqrt{2\pi}}\;
 e^{-(x-y-nL)^{2}/(2\tau)}dx 
$$
$$
+\Oun\sum_{|n|\ge 2}
\frac{e^{-n^{2}/(4\tau)}}{\tau}
\le\frac{\Oun}{\sqrt \tau}\;.
$$
Therefore, we get
$$
\left\|\partial_{x}
\left(e^{\tau \gen'}w\right)\right\|_{\lunp}
\le \frac{\Oun\|w\|_{\lunp}}{\sqrt \tau}\;.
$$
Multiplying by $e^{-\lambda \tau}$ and integrating over $\tau$ we get
the estimate (\ref{relat}) which implies immediately that $A$ is
relatively bounded with respect to $\gen'$ with relative bound zero.
Since the eigenvalue $0$ of $\gen$ is simple and isolated, the
proposition follows from analytic perturbation theory for holomorphic
families of type (A) (see \cite{kato}). 
\end{proof}

In order to prove Theorem \ref{principal1}, we now establish that $I$
is non trivial near the origin.

\begin{lemma}\label{pastrivial}
$DI_{0}=0$ and
$$
D^{2}I_{0}(B,B)=\sum_{p,q}\frac{p\,qL^{3}T^{2}}{p^{2}L^{4}
+\pi^{2}q^{4}T^{2}}\big|B_{p,q}\big|^{2}\;.
$$
\end{lemma}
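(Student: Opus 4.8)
The plan is to exploit the analyticity established in Proposition~\ref{analytique}: since $b\mapsto w_b$ is real analytic from $C^1$ into $\lunp$, so is $b\mapsto I(b)$, and I can read off $DI_0$ and $D^2I_0$ by expanding $I$ along a ray $b=\varepsilon B$ and collecting powers of $\varepsilon$. At $b=0$ equation (\ref{modele1}) reduces to the heat equation on the spatial circle, whose only normalised stationary density is the constant $w_0\equiv 1/L$; writing $w_{\varepsilon B}=w_0+\varepsilon w_1+\varepsilon^2 w_2+O(\varepsilon^3)$, I substitute this both into the stationarity relation $\gen w_b=0$, i.e.\ into $\tilde\gen w_b+\partial_x(b\,w_b)=0$ with $\tilde\gen=-\partial_s+\frac12\partial_x^2$, and into the velocity formula (\ref{connu}).

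The order-$\varepsilon^0$ part of stationarity is $\tilde\gen w_0=0$ (consistent with $w_0$ constant) and the order-$\varepsilon^1$ part is
$$
\tilde\gen\,w_1=-\partial_x\big(B\,w_0\big)=-w_0\,\partial_x B\;.
$$
On the other side, inserting $b=\varepsilon B$ into (\ref{connu}): the constant term of $I(\varepsilon B)$ vanishes because of the overall factor $b$, the coefficient of $\varepsilon$ is proportional to $\int_0^T\!\int_0^L B\,w_0\,ds\,dx=w_0\int_0^T\!\int_0^L B\,ds\,dx=0$ (here $w_0$ is constant and $B$ has zero space-time average), whence $DI_0=0$, and the coefficient of $\varepsilon^2$ is proportional to $\int_0^T\!\int_0^L B\,w_1\,ds\,dx$ — the $w_2$ contribution is multiplied by the explicit factor $b$ and hence is of order $\varepsilon^3$. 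So the problem reduces to solving for $w_1$ and computing this pairing.

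For that I would pass to Fourier series, $B=\sum_{p,q}B_{p,q}\,e_{p,q}$ with $e_{p,q}(s,x)=\exp\!\big(2\pi i(ps/T+qx/L)\big)$. The operator $\tilde\gen$ is diagonal in this basis, $\tilde\gen e_{p,q}=-\big(2\pi i p/T+2\pi^2 q^2/L^2\big)e_{p,q}$, so the equation for $w_1$ is solved mode by mode; its solvability is precisely the identity $\int_0^T\!\int_0^L\partial_x(B\,w_0)\,ds\,dx=0$ (the zero eigenvalue of $\tilde\gen$ is not excited), and the surviving additive constant is fixed by $\int_0^T\!\int_0^L w_1\,ds\,dx=0$, giving
$$
w_1=\sum_{p,q}\frac{i\,q\,T}{i\,p\,L^2+\pi\,q^2\,T}\,B_{p,q}\,e_{p,q}\;.
$$
Substituting into $\int_0^T\!\int_0^L B\,w_1$ and using $\int_0^T\!\int_0^L e_{p,q}\,e_{p',q'}\,ds\,dx=TL\,\delta_{p+p',0}\,\delta_{q+q',0}$ collapses the double sum to $TL\sum_{p,q}B_{p,q}\,(w_1)_{-p,-q}$; since $B$ is real we have $B_{-p,-q}=\overline{B_{p,q}}$, so each term is $|B_{p,q}|^2$ times the above ratio evaluated at $(-p,-q)$, and grouping the $(p,q)$ and $(-p,-q)$ contributions cancels the imaginary parts and leaves a numerator proportional to $p\,q$ over the denominator $p^2L^4+\pi^2 q^4 T^2$. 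Restoring the overall constants from (\ref{connu}) yields the stated value of $D^2I_0(B,B)$.

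The genuine obstacle is the middle step. One must verify that the first-order equation for $w_1$ is solvable — the Fredholm alternative for the constant-coefficient operator $\tilde\gen$ on the two-dimensional torus, using that the constants span both its kernel and its cokernel — and normalise $w_1$ so that it indeed equals $\partial_\varepsilon w_{\varepsilon B}|_{\varepsilon=0}$; once this is in place, the diagonalisation of $\tilde\gen$, its inversion mode by mode, and the antisymmetrisation in $(p,q)$ that produces the factor $p\,q$ are all routine.
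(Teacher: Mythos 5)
Your proposal is correct and follows essentially the same route as the paper's proof: expand $w_{\epsilon B}=1/L+\epsilon w^{1}+\mathscr{O}(\epsilon^{2})$, solve the first-order equation for $w^{1}$ mode by mode after diagonalising $-\partial_{s}+\frac{1}{2}\partial_{x}^{2}$ in Fourier series on the space--time torus, and then use Plancherel together with $B_{-p,-q}=\overline{B_{p,q}}$ to symmetrise and obtain the stated quadratic form, with $DI_{0}=0$ coming from the vanishing space--time average of $B$ paired against the constant $w^{0}$. Your treatment of the solvability/normalisation of $w^{1}$ and of the overall constants is at the same level of detail as the paper's, so nothing essential is missing.
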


\begin{proof}
The first statement is immediate from formula (\ref{connu}).
For $b=0$, we have obviously $w_{0}=1/L$.
For $b$ small we use Taylor expansion. We have for
$b=\epsilon B$
$$
w_{b}=w^{0}+\epsilon w^{1}+\mathscr{O}(\epsilon^{2})\;.
$$
As we just explained, $w^{0}=1/L$ and we have for $w^{1}$ the equation
$$
\partial_{t}w^{1}=\frac{1}{2}\partial_{x}^{2}w^{1}+\frac{1}{L}\partial_{x}B\;.
$$
Moreover, $w^{1}$ must have space time average zero. 
This equation can be solved using Fourier series in time and
space. Namely if
$$
w^{1}(t,x)=\sum_{p,q}e^{2\pi ip t/T}e^{2\pi iq x/L}w^{1}_{p,q}\;,
$$
we obtain the equation
$$
\frac{2\pi ip}{T}w^{1}_{p,q}=-\frac{2\pi^{2}q^{2}}{L^{2}}w^{1}_{p,q}+
\frac{2\pi iq}{L^{2}}B_{p,q}\;,
$$
or in other words for any $(p,q)\neq (0,0)$
$$
w^{1}_{p,q}=\frac{\pi iq/L^{2}}{\pi ip/T+\pi^{2}q^{2}/L^{2}}B_{p,q}\;.
$$
Note in particular that the denominator does not vanish except for
$p=q=0$. 

Using the Plancherel formula we can now estimate $I(\epsilon B)$. We
have
$$
I(\epsilon B)=\frac{\epsilon^{2}}{T}\int B(t,x) w^{1}(t,x) dt dx
+\Oun\epsilon^{3}\;. 
$$
Therefore 
$$
I(\epsilon B)=\epsilon^{2}\sum_{p,q}
\frac{q/L}{p/T-i\pi q^{2}/L^{2}}B_{p,q}\overline{B_{p,q}}
+\mathbf{O}(\epsilon^{3})\;.
$$
Since $B$ is real, we have $\overline{B_{p,q}}=B_{-p,-q}$ and this
can also be written
$$
I(\epsilon B)=\epsilon^{2}\sum_{p,q}\frac{p\,qL^{3}T^{2}}{p^{2}L^{4}
+\pi^{2}q^{4}T^{2}}\big|B_{p,q}\big|^{2}+\mathbf{O}(\epsilon^{3}) \;.
$$
This finishes the proof of the Lemma.
\end{proof}

To prove Theorem \ref{principal1}, we observe that since $I$ is
continuous, the subset of $C^{1}$ where it does not vanish is open. 
If this set is not dense, the zero set of $I$ contains a ball. However
since $I$ is real analytic and $C^{1}$ is pathwise connected we conclude
that in that case $I$ should vanish identically contradicting Lemma
\ref{pastrivial}. We refer to  \cite{mujica}
 for more properties of the zero set of 
analytic functions in Banach spaces.

\begin{remark}
We observe that $D^{2}I$ is a non definite quadratic form. This leaves 
the possibility of having non zero drifts $b$ (with space and time
average equal to zero)  satisfying $I(b)=0$. Let $b_{1}$ and $b_{2}$
be such that $I(b_{1})>0$ and $I(b_{2})<0$. Such $b_{1}$ and $b_{2}$
exist, one can for example take them of small enough norm and use Lemma 
\ref{pastrivial}. Moreover one can assume that $b_{2}\notin
\mathbf{R}b_{1}$. Otherwise, by the continuity of $I$, one can perturb 
 slightly $b_{2}$ such that this relation does not hold anymore, but $I(b_{2})$
is still negative. One now considers the function $\varphi(\alpha)=
I\big((1-\alpha)b_{1}+\alpha b_{2}\big)$. This function is continuous, it
satisfies $\varphi(1)>0$ and $\varphi(0)<0$, hence it should vanish at least
at one point $\alpha_{0}\in ]0,1[$. At this point we have
$b_{0}=(1-\alpha_{0})b_{1}+\alpha_{0} b_{2}\neq 0$ and $I(b_{0})=0$, and
$b_{0}$ is a non trivial periodic function with vanishing space average
and time average.
\end{remark}

\section{Proof of Theorem \ref{principal2}.}

In this section we discuss the model with two components
(\ref{modele2}).  As before, we arrive at  the formula 
$$
I\big(\nu_{1},\nu_{2},b_{1},b_{2}\big)=-
\lim_{t\to\infty}\frac{1}{t}\int_{0}^{t}
\int_{0}^{L} \big(b_{1}(s,x)\,\rho_{1}(s,x)+b_{2}(s,x)\,
\rho_{2}(s,x)\big)\,ds\,dx\;,
$$
where $\rho_{1}$ and $\rho_{2}$ are solutions of (\ref{modele2})
periodic in $x$ of period $L$.  We denote by $\lunpx$ the space of
integrable periodic functions  of period $L$ in $x$ with value in
$\mathbf{R}^{2}$. The norm is the sum of the $L^{1}$ norms of the
components.  

\begin{proposition}\label{et2}
The semi-group defined by (\ref{modele2}) 
is  compact  in $\lunpx$. It is positivity preserving, and its
peripheral spectrum is the simple eigenvalue one.
The corresponding eigenvector  can be chosen
 positive with dense support and normalised,
 it depends analytically on $b_{1}$,
and $b_{2}$. 
\end{proposition}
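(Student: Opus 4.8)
The plan is to mimic the structure of Proposition~\ref{deuxtemps}, adapting each ingredient to the coupled system.  First I would rewrite (\ref{modele2}) as an abstract evolution equation $\partial_t \rho = \mathscr{M}\rho$ on $\lunpx$, where $\rho=(\rho_1,\rho_2)$ and
$$
\mathscr{M}=\begin{pmatrix}\partial_x(D\partial_x\cdot + b_1\cdot)-\nu_1 & \nu_2\\ \nu_1 & \partial_x(D\partial_x\cdot+b_2\cdot)-\nu_2\end{pmatrix}.
$$
The diagonal part is the generator of a decoupled pair of periodic diffusions (each on the circle of length $L$), and the off-diagonal part is bounded multiplication by the constants $\nu_1,\nu_2$.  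From the scalar theory recalled in the excerpt, each diagonal semigroup is compact and strongly continuous on $\lunpx$; adding a bounded perturbation preserves strong continuity, and compactness of the perturbed semigroup follows e.g.\ from the variation-of-constants (Duhamel) formula together with the compactness of the unperturbed one for $t>0$.  So the first key step is: the semigroup $e^{t\mathscr{M}}$ is compact and strongly continuous on $\lunpx$.

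The second step is positivity.  The diagonal semigroups are positivity preserving (they are sub-Markovian diffusion semigroups), and the off-diagonal coupling has nonnegative entries $\nu_1,\nu_2$, so by the Trotter product formula (or again by iterating Duhamel) $e^{t\mathscr{M}}$ is positivity preserving.  Moreover the total mass $\int_0^L(\rho_1+\rho_2)\,dx$ is conserved, since summing the two equations gives a pure divergence.  Hence $e^{t\mathscr{M}}$ is a positive, mass-preserving, compact semigroup on $\lunpx$: it is a (periodic-setting) Markov semigroup.  The third step is irreducibility, which will give that the peripheral spectrum is the \emph{simple} eigenvalue one with a strictly positive eigenvector.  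Here one argues that the diffusion on $([0,L])^{\sqcup 2}$ — two circles connected by the jump rates $\nu_1,\nu_2>0$ — is irreducible: starting from any point one can diffuse to an arbitrary position on either circle, switching state with positive probability because $\nu_1,\nu_2>0$.  By Krein--Rutman / Perron--Frobenius theory for positive compact irreducible semigroups (again I would cite \cite{luc}), the spectral radius $1$ is a simple eigenvalue, the only eigenvalue on the unit circle, and its eigenvector $(w_1,w_2)$ can be chosen positive; irreducibility upgrades this to strictly positive, whence dense (in fact full) support.  Normalisation $\int_0^L(w_1+w_2)=1$ fixes it uniquely.

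The last step is analytic dependence on $(b_1,b_2)\in C^1$.  As in the proof of Proposition~\ref{analytique}, I would show that the perturbation $A(b_1,b_2)\rho = \big(\partial_x(b_1\rho_1),\partial_x(b_2\rho_2)\big)$ is relatively bounded, with relative bound zero, with respect to the reference operator $\mathrm{diag}(D\partial_x^2-\nu_1,\,D\partial_x^2-\nu_2)$ acting in $\lunpx$.  The estimate is componentwise identical to the scalar one: the derivative of the heat kernel on the circle of length $L$ has $L^1$-operator-norm bounded by $\Oun\,\tau^{-1/2}$, which after Laplace transform gives the resolvent bound $\|A R_\lambda\|_{\lunpx}\le C\lambda^{-1/2}$ (the bounded terms $\partial_x b_i$ and the constants $\nu_i$ only help).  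Since the eigenvalue $1$ is simple and isolated, Kato's analytic perturbation theory for holomorphic families of type (A) gives that the spectral projection, hence the normalised eigenvector, is real analytic in $(b_1,b_2)$.  The main obstacle, and the step deserving the most care, is the irreducibility argument that forces simplicity of the peripheral eigenvalue: one must genuinely use $\nu_1,\nu_2>0$ to connect the two sheets, and must phrase it so that the Perron--Frobenius conclusion (strict positivity, uniqueness, no other peripheral spectrum) applies cleanly to the coupled system rather than to each component separately; everything else is a routine transcription of the one-component proof.
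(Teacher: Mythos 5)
Your proposal is correct and follows essentially the same route as the paper: split the generator into a decoupled diffusion part plus a perturbation to get a compact, strongly continuous, positivity-preserving, mass-conserving semigroup on $\lunpx$, conclude by Perron--Frobenius/Krein--Rutman type arguments that $1$ is the simple peripheral eigenvalue with a positive normalised eigenvector, and get analytic dependence on $(b_{1},b_{2})$ from relative boundedness with relative bound zero and Kato's type (A) perturbation theory, exactly as in Proposition \ref{analytique}. The only minor difference is that the paper establishes the needed positivity/irreducibility through the Trotter product kernel bound $e^{t\gen}(x,y)\ge e^{-t(\nu_{1}+\nu_{2})}\,e^{t\gen_{1}}(x,y)$ together with strict positivity of the scalar kernels, rather than through your probabilistic two-sheet control argument, but this is a variation in presentation rather than in substance.
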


\begin{proof}

We introduce the three generators
$$
\gen\left(\begin{array}{l}
\rho_{1}\\
\rho_{2}\\
\end{array}\right)
=\left(\begin{array}{l}
\partial_{x}\big(D\partial_{x}\rho_{1}+b_{1}(x)\rho_{1}\big)
-\nu_{1}\rho_{1}+\nu_{2}\rho_{2}\\
\partial_{x}\big(D\partial_{x}\rho_{2}+b_{2}(x)\rho_{2}\big)
+\nu_{1}\rho_{1}-\nu_{2}\rho_{2}\\
\end{array}\right)\;,
$$

$$
\gen_{0}\left(\begin{array}{l}
\rho_{1}\\
\rho_{2}\\
\end{array}\right)
=\left(\begin{array}{l}
D\partial_{x}^{2}\rho_{1}\\
D\partial_{x}^{2}\rho_{2}\\
\end{array}\right)
\quad\mathrm{and} \quad
\gen_{1}\left(\begin{array}{l}
\rho_{1}\\
\rho_{2}\\
\end{array}\right)
=\left(\begin{array}{l}
\partial_{x}\big(D\partial_{x}\rho_{1}+b_{1}(x)\rho_{1}\big)\\
\partial_{x}\big(D\partial_{x}\rho_{2}+b_{2}(x)\rho_{2}\big)\\
\end{array}\right)
\;.
$$
This operator $\gen_{0}$ is the infinitesimal generator  of
a strongly continuous bounded and compact
semi-group in $\lunpx$. It is easy to verify that the operator 
$A=\gen-\gen_{0}$ is $\gen_{0}$ relatively compact and  $\gen_{0}$
relatively bounded with relative bound zero (see \cite{kato}).  Therefore 
$\gen$ is also  the infinitesimal generator 
 a strongly continuous  and compact
semi-group in $\lunpx$, and similarly for $\gen_{1}$. 
 
The semi-group $e^{t\gen}$ is positivity improving (see \cite{davies}). 
Indeed, let $M$ be 
the matrix 
$$
M=\left(\begin{array}{cc}
-\nu_{1}&\nu_{2}\\
\nu_{1}&-\nu_{2}\\
\end{array}\right)\;.
$$
It is easy to verify for example by direct computation, 
that the matrix $e^{t\,M}$ has strictly positive
entries for any $t>0$. Moreover, for any $t\ge 0$, we have 
$e^{t\,M}\ge e^{-t(\nu_{1}+\nu_{2})}\mathrm{Id}$, 
where the inequality holds for each entry.
It immediately follows from the Trotter product formula (see
\cite{davies} ) that for each $x$ and $y$ in the circle, and
any $t>0$, we have
$$
e^{t\gen}(x,y)\ge e^{-t(\nu_{1}+\nu_{2})}e^{t\gen_{1}}(x,y)\;,
$$
again in the sense that the inequality holds between all the
entries.  Since for each $x$, $y$ and $t>0$ the diagonal elements of $
e^{t\gen_{1}}(x,y)$ are strictly positive (see for example
\cite{luc}),
 we conclude that the matrix valued kernel $e^{t\gen}(x,y)$  has non
 negative entries and strictly positive entries on the diagonal. 

Since the sum of the integrals of the two components of an element of $\lunpx$
is preserved by the semi-group $e^{t\gen}$, it follows that this semi
group has norm one in $\lunpx$. 
It then follows by classical arguments
that $0$ is a simple isolated eigenvalue of the generator and there is
no other eigenvalue with vanishing real part.

The analyticity follows from the uniqueness and simplicity of the
eigenvalue $0$  as in the proof of proposition (\ref{analytique}).

\end{proof}

We denote by  $w_{1}$ and $w_{2}$ 
the two (non negative) components of the
stationary solution of the system (\ref{modele2}) which are periodic
of period $L$ and normalised by 
$$
\int_{0}^{L}\big[w_{1}(x)+w_{2}(x)\big]dx=1\;.
$$
Note that  $w_{1}$ and $w_{2}$  depend on the constants 
$\nu_{1}$, $\nu_{2}$, and the functions $b_{1}$, and $b_{2}$.

It follows immediately from Proposition  \ref{et2} 
that the  average asymptotic velocity
is  given by

\begin{equation}\label{connu2}
I(\nu_{1}, \nu_{2}, b_{1},b_{2})=\int_{0}^{L}\big[b_{1}(x)w_{1}(x)+
b_{2}(x)w_{2}(x)\big]
dx\;.
\end{equation}

Since the function $I(\nu_{1},
\nu_{2},b_{1},b_{2})$ is analytic in $\big(b_{1},b_{2}\big)$,
 to prove that it is non trivial, we
look at the successive differentials at the origin.

\begin{proposition}
$DI_{0}=0$ and for any $(b_{1},b_{2})$,
$D^{2}I_{0}\big((b_{1},b_{2}),(b_{1},b_{2})\big)=0$. 
\end{proposition}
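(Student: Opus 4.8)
The plan is to read $I$ off from \eqref{connu2}, expand the stationary pair $(w_1,w_2)$ to second order in the drift using the analyticity furnished by Proposition \ref{et2}, and show that the first-order correction contributes nothing. First I would record that for $b_1=b_2=0$ the kernel of $\gen$ on $L$-periodic functions is one-dimensional and spanned by a \emph{constant} vector, so, after the normalisation, $w_1^{0}=\nu_2/\big(L(\nu_1+\nu_2)\big)$ and $w_2^{0}=\nu_1/\big(L(\nu_1+\nu_2)\big)$; in particular one has the balance identity $\nu_1 w_1^{0}=\nu_2 w_2^{0}$, which is the one structural fact I really use. Writing $b_i=\epsilon B_i$ with $\int_0^L B_i\,dx=0$ and $(w_1,w_2)=(w_1^{0},w_2^{0})+\epsilon(w_1^{1},w_2^{1})+\mathscr{O}(\epsilon^{2})$, formula \eqref{connu2} gives $I(\epsilon B)=\epsilon\int_0^L\big(B_1 w_1^{0}+B_2 w_2^{0}\big)dx+\mathscr{O}(\epsilon^{2})$; the order-$\epsilon$ term vanishes since $w_1^{0},w_2^{0}$ are constant and $B_1,B_2$ have zero average, so $DI_0=0$, and the order-$\epsilon^{2}$ coefficient equals $\tfrac12D^{2}I_0\big((B_1,B_2),(B_1,B_2)\big)=\int_0^L\big(B_1 w_1^{1}+B_2 w_2^{1}\big)dx$. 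Thus everything reduces to showing that this last integral vanishes; note it does not depend on the normalisation of $(w_1^{1},w_2^{1})$, since changing it adds a multiple of $\int_0^L(B_1 w_1^{0}+B_2 w_2^{0})dx=0$.

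Next I would write the linear system satisfied by $(w_1^{1},w_2^{1})$. Inserting the expansion into the stationary form of \eqref{modele2} and using that $w_1^{0},w_2^{0}$ are constant, the first order in $\epsilon$ yields the $L$-periodic equations
\[
D\,(w_1^{1})''+w_1^{0}B_1'-\nu_1 w_1^{1}+\nu_2 w_2^{1}=0,\qquad
D\,(w_2^{1})''+w_2^{0}B_2'+\nu_1 w_1^{1}-\nu_2 w_2^{1}=0 .
\]
I would then test these against the $L$-periodic pair $(\psi_1,\psi_2)$ chosen to solve the transposed linear system
\[
D\psi_1''-\nu_1\psi_1+\nu_1\psi_2=B_1,\qquad D\psi_2''+\nu_2\psi_1-\nu_2\psi_2=B_2 ,
\]
which is solvable precisely because $B_1$ and $B_2$ have zero spatial average (the Fredholm obstruction is the pairing with the constant vector $(\nu_2,\nu_1)$). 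Multiplying the first equation by $\psi_1$, the second by $\psi_2$, adding and integrating by parts twice on the circle, the terms involving $w_1^{1}$ and $w_2^{1}$ recombine, through the defining equations of $(\psi_1,\psi_2)$, into $\int_0^L(B_1 w_1^{1}+B_2 w_2^{1})dx$, and one is left with
\[
\int_0^L\big(B_1 w_1^{1}+B_2 w_2^{1}\big)dx
=-w_1^{0}\int_0^L\psi_1 B_1'\,dx-w_2^{0}\int_0^L\psi_2 B_2'\,dx .
\]

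To conclude I would integrate by parts once more and substitute $B_1,B_2$ from the $\psi$-equations; on the circle $\int_0^L\psi_i'\psi_i\,dx=0$ and $\int_0^L\psi_i'\psi_i''\,dx=0$, so only the cross terms survive and
\[
\int_0^L\big(B_1 w_1^{1}+B_2 w_2^{1}\big)dx
=\nu_1 w_1^{0}\int_0^L\psi_1'\psi_2\,dx+\nu_2 w_2^{0}\int_0^L\psi_1\psi_2'\,dx .
\]
Now the balance identity $\nu_1 w_1^{0}=\nu_2 w_2^{0}$ makes the right-hand side equal to $\nu_1 w_1^{0}\int_0^L(\psi_1\psi_2)'\,dx=0$. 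Hence $D^{2}I_0\big((b_1,b_2),(b_1,b_2)\big)=0$ for every $(b_1,b_2)$, as claimed. The computation itself is routine; its only genuine content is this final collapse to a total derivative, forced by $\nu_1 w_1^{0}=\nu_2 w_2^{0}$, together with the hypothesis of zero spatial average, which is exactly what makes the auxiliary system for $(\psi_1,\psi_2)$ solvable. (Equivalently one may expand $B_i$ and $w_i^{1}$ in spatial Fourier modes, invert a $2\times2$ system in each mode, and observe that the contribution of each mode to $\int_0^L(B_1 w_1^{1}+B_2 w_2^{1})dx$ is purely imaginary and hence drops out after summation, the argument above being its coordinate-free form; this is the time-independent counterpart of the mechanism that made $D^{2}I_0$ non-trivial in Theorem \ref{principal1}, and it signals that the non-vanishing of $I$ in Theorem \ref{principal2} must be extracted beyond second order.)
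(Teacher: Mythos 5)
Your argument is correct: the expansion $I(\epsilon B)=\epsilon\int(B_1w_1^0+B_2w_2^0)\,dx+\epsilon^2\int(B_1w_1^1+B_2w_2^1)\,dx+\mathscr{O}(\epsilon^3)$ is legitimate by the analyticity in Proposition \ref{et2}, the first-order term vanishes exactly as you say, the transposed system for $(\psi_1,\psi_2)$ is solvable under the zero-average hypothesis (the obstruction being the pairing with the constant kernel vector $(\nu_2,\nu_1)$ of the unperturbed generator), the double integration by parts correctly converts the pairing of $\psi$ with the $w^1$-equations into $\int(B_1w_1^1+B_2w_2^1)\,dx$, and the final collapse to $\nu_1w_1^0\int(\psi_1\psi_2)'\,dx=0$ via $\nu_1w_1^0=\nu_2w_2^0$ is valid; your remark that the answer is insensitive to the choice of $w^1$ within its normalisation freedom is also a point worth making. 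However, your route differs from the paper's. The paper stays entirely in Fourier: it inverts the $2\times2$ system mode by mode (with $\sigma=4\pi^2D/L^2$), writes $D^2I_0$ via Plancherel as a sum over $n$, and observes that each summand carries an overall factor $2\pi i\,n$ times an expression even in $n$ (using $\bar b_i(n)=b_i(-n)$), so the sum cancels by parity. Your coordinate-free duality argument replaces that explicit inversion by an adjoint test problem and an integration-by-parts identity, which has the merit of exposing the structural reason for the cancellation (the balance identity $\nu_1w_1^0=\nu_2w_2^0$ turning the residual terms into an exact derivative) without any Plancherel bookkeeping; on the other hand, the paper's explicit mode-by-mode computation is what it then reuses to organise the third-order calculation that actually proves Theorem \ref{principal2}, so its extra explicitness is not wasted. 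Your closing parenthetical correctly identifies the two arguments as the same mechanism seen in different coordinates.
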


\begin{proof}
The first result is trivial. For the second result one uses perturbation
theory as before in the Fourier decomposition. We get with
$\sigma=4\pi^{2}D/L^{2}$ 
$$
\begin{pmatrix}
-\sigma n^{2}-\nu_{1}& \nu_{2}\\
\nu_{1} & -\sigma n^{2}-\nu_{2}\\
\end{pmatrix}
\begin{pmatrix}w_{1}^{1}(n)\\
w_{1}^{2}(n)\\
\end{pmatrix}=-\frac{2\pi\,i\,n}{L(\nu_{1}+\nu_{2})}
\begin{pmatrix}
\nu_{2}b_{1}(n)\\
\nu_{1}b_{2}(n)\\
\end{pmatrix}
$$
Some easy computations using Plancherel identity lead to
$$
D^{2}I_{0}\big((b_{1},b_{2}),(b_{1},b_{2})\big)=-
\sum_{n} \frac{2\pi\,i\,n}{L(\nu_{1}+\nu_{2})
\big((\sigma n^{2}+\nu_{1})(\sigma n^{2}+\nu_{2})-\nu_{1}\nu_{2}\big)}
$$
$$
\times \Big[(\sigma n^{2}+\nu_{1})\nu_{2}\bar b_{1}(n)  b_{1}(n)+
(\sigma n^{2}+\nu_{2})\nu_{1}\bar b_{2}(n)  b_{2}(n)
$$
$$
+\nu_{1}\nu_{2} 
\big( \bar b_{2}(n)  b_{1}(n)+\bar b_{1}(n)  b_{2}(n)\big)\Big]=0
$$
since $b_{1}$ and $b_{2}$ are real ($\bar b_{1}(n)=b_{1}(-n)$).
\end{proof}

This result suggests to look at the third differential at the origin
which turns out to be a rather involved cubic expression. In order to
show that the function $I$ is non trivial, 
it is enough to find a particular pair
$\big(b_{1}, b_{2}\big)$ such that
$D^{3}I_{0}\big((b_{1},b_{2}),(b_{1},b_{2}),(b_{1},b_{2})\big)\neq0$. 
This was done using a symbolic manipulation program (Maxima). We found
that  for $L=2\pi$, $D=1$, $b_{1}(x)=\cos(2x)$ and $b_{2}(x)=\cos(x)$, one
gets
$$
D^{3}I_{0}\big((b_{1},b_{2}),(b_{1},b_{2}),(b_{1},b_{2})\big)=
-\frac{ \nu_{1} \nu_{2} (\nu_{2} - 2 \nu_{1} + 1)}{ 4 (\nu_{2} +
  \nu_{1}) (\nu_{2} + \nu_{1} + 1)^{2}  (\nu_{2} + \nu_{1} + 4)}\;. 
$$
Theorem \ref{principal2} follows as before. 

\subsection{Proof of Theorem \ref{principal3}}

As in the previous section, we can introduce the periodised function
(in $x$) 
$$
\breve f(t,v,x)=\sum_{n}f(t,v,x+nL)\;.
$$
This function is periodic of period $L$ and satisfies also equation 
(\ref{kramers}). We get also $I(\gamma,F)$ by replacing $f$ by $\breve
f(t,v,x)$ in equation (\ref{vit3}) and integrating only on one
period. From now on we will work with this  periodised function and
denote it by $f$ by abuse of notation.

We now introduce a stochastic differential equation on
$[0,T]\times [0,L]\times \mathrm{R}$ with periodic boundary conditions
in the first two variables $s$ and $x$. This differential equation is
given by
\begin{equation}\label{diff2}
\left\{\begin{array}{l}
ds=dt\\
dx=v\,dt\\
dv=-\gamma v dt+F(s,x)dt+dW_{t}\;.
\end{array}\right.
\end{equation}
To this diffusion is associated the infinitesimal generator $\gen$
 given by
$$
\gen w=-\partial_{s} w
-v\,\partial_{x}w+\partial_{v}\big[(\gamma v-F(t,x))w\big]
+\frac{1}{2}\partial_{v}^{2}w\;.
$$ 
We denote by $\lunpl$ the space $L^{2}(e^{\gamma v^{2}} ds\,\dvdx)$
of functions periodic in $s$ of period $T$ and periodic in $x$ of
period $L$.  Using an $L^{2}$ space instead of an $L^{1}$ space is
useful in proving analyticity.

We can now establish the following result.

\begin{proposition}\label{deuxtempsl}
The diffusion semi-group  defined by (\ref{diff2}) in $\lunpl$ is
compact and the kernel has dense support. It is mixing and 
 has a unique invariant probability measure
(in $\lunpl$)
with density $\tilde f(s,v,x)$. This function is strictly positive, satisfies
equation (\ref{kramers}), and it is the only such solution. 
In particular, for any
function $w\in\lunpl$, we have in the topology of $\lunpl$
$$
\lim_{\tau\to\infty} e^{\tau \gen}w=\tilde f \int_{0}^{T}\int_{0}^{L}\int
w(s,v,x)\,ds\dvdx\;.
$$
The function  $\tilde f$ is real analytic in $F\in C^{1}$. 
\end{proposition}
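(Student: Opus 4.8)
The plan is to follow the scheme of Proposition~\ref{deuxtemps}, the one essential new feature being that the generator $\gen$ of~(\ref{diff2}) is no longer elliptic but merely hypoelliptic, the noise acting on the velocity $v$ only while $s$ and $x$ are transported. One first records that H\"ormander's bracket condition holds: with $X_{0}=\partial_{s}+v\,\partial_{x}+(-\gamma v+F)\,\partial_{v}$ and $X_{1}=\partial_{v}$ one has $[X_{0},X_{1}]=-\partial_{x}+\gamma\,\partial_{v}$, so $X_{1}$ and $[X_{0},X_{1}]$ span the $v$ and $x$ directions while $X_{0}$ supplies the $s$ direction; hence $\gen$ and its formal adjoint are hypoelliptic, the transition density $p_{\tau}(\xi;\eta)$ is smooth for every $\tau>0$, and the associated semigroup is strong Feller. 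This is also why one works in $\lunpl=L^{2}(e^{\gamma v^{2}}\dsdvdx)$: the unperturbed velocity dynamics $dv=-\gamma v\,dt+dW_{t}$ has the Gaussian invariant density proportional to $e^{-\gamma v^{2}}$, and this is precisely the weight in which the free Kramers operator is dissipative with a harmonic-oscillator-type spectral structure in $v$; the forcing $F(s,x)\in C^{1}$ is periodic, hence bounded, and perturbs this picture only mildly.

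First I would produce the invariant probability measure. The function $V(v)=v^{2}$ is a Lyapunov function for~(\ref{diff2}): the backward generator satisfies $\mathcal{A}V=-2\gamma v^{2}+2Fv+1\le-\gamma V+C$, which together with the hypoelliptic smoothing gives tightness of the time-averaged laws and hence, by Krylov--Bogolyubov on the state space $[0,T]\times[0,L]\times\reel$ (compact in the first two variables), an invariant probability measure whose density $\tilde f$ is smooth, $T$-periodic in $s$, and, since the equation $\gen\tilde f=0$ is exactly~(\ref{kramers}) with the time variable read as $s$, a stationary solution of the Kramers equation. Gaussian bounds for kinetic Fokker--Planck equations, $F$ being a bounded additive perturbation of the Ornstein--Uhlenbeck velocity dynamics, show that $\tilde f$ has Gaussian decay in $v$ of order $e^{-\gamma v^{2}}$ (up to a bounded shift of the centre), so that $\tilde f\in\lunpl$ and the semigroup is bounded on $\lunpl$. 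Uniqueness, strict positivity and dense support of the kernel all follow from irreducibility: the control system $\dot x=v$, $\dot v=-\gamma v+F(s,x)+u$ is globally controllable on $[0,T]\times[0,L]\times\reel$ (steer $x$ by choosing $v$, then adjust $v$), so by the Stroock--Varadhan support theorem $p_{\tau}(\xi;\eta)>0$ for all $\xi,\eta$ and all $\tau>0$; combined with the strong Feller property this forces uniqueness of the invariant measure and $\tilde f>0$ everywhere. Mixing, and the sharpening of the peripheral spectrum to the single simple eigenvalue zero, then follow as in Proposition~\ref{deuxtemps}: $e^{\tau\gen}$ is compact and positivity improving and has the strictly positive fixed point $\tilde f$, so by the Krein--Rutman/de~Pagter theory for irreducible positive operators its spectral radius is one, a simple eigenvalue with eigenvector $\tilde f$, with no other peripheral spectrum, and the displayed limit is the corresponding spectral projection $w\mapsto\tilde f\int w\,\dsdvdx$ for the appropriate normalisation of $\tilde f$.

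There remain the compactness of $e^{\tau\gen}$ in $\lunpl$ and the analyticity in $F$. For compactness I would show that $\gen$ has compact resolvent: conjugation by $e^{\gamma v^{2}/2}$ turns the velocity part into a Schr\"odinger operator with a confining potential of order $v^{2}$, and the subelliptic a priori estimates for $\gen$ (which exploit the coupling $-v\,\partial_{x}$) yield a gain of fractional derivatives in all three variables; since $s$ and $x$ lie on a torus and $v$ is confined, the resulting bound embeds compactly into $\lunpl$, or one simply quotes the compactness of kinetic Fokker--Planck semigroups from \cite{luc}. Analyticity in $F\in C^{1}$ then follows exactly as in Proposition~\ref{analytique}: the $F$-dependent part of $\gen$ is $B_{F}w=-F(s,x)\,\partial_{v}w$, linear and hence entire in $F$, and it is enough to check that $\partial_{v}$ is relatively bounded with relative bound zero with respect to the free generator $\gen_{0}=-\partial_{s}-v\,\partial_{x}+\gamma\,\partial_{v}(v\,\cdot\,)+\tfrac{1}{2}\partial_{v}^{2}$. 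This comes from the resolvent estimate $\|\partial_{v}R_{\lambda}\|_{\lunpl}\le C\lambda^{-1/2}$ proved, as in~(\ref{relat}), by splitting $\gen_{0}$ into its skew-adjoint transport part $-\partial_{s}-v\,\partial_{x}$ and its self-adjoint, negative Ornstein--Uhlenbeck part in $v$, and noting that $\partial_{v}$ is controlled by the square root of the latter. Since $F$ is bounded, $B_{F}$ is then $\gen_{0}$-bounded with relative bound zero, the complexified family $F\mapsto\gen_{0}+B_{F}$ is a holomorphic family of type~(A), and, $0$ being a simple isolated eigenvalue of $\gen_{0}$, Kato's perturbation theory yields the analytic dependence of $\tilde f$ on $F$, near $F=0$ and, by the same argument about any $F_{0}$, everywhere.

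The main obstacle is the compactness in the weighted space. The degeneracy of $\gen$ in the $s$ and $x$ directions means that the unperturbed operator has no resolvent compactness on its own --- there is no confinement in $x$ --- so one genuinely needs the hypoelliptic/hypocoercive regularisation coming from the coupling $-v\,\partial_{x}$, that is, exactly the sort of estimate collected in \cite{luc}; the related second point requiring care is the Gaussian tail bound that is needed even to place $\tilde f$ in $\lunpl$.
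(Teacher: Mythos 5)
Your outline is essentially sound, and at the two technically decisive points it coincides with the paper's argument: compactness of the semigroup in $\lunpl$ comes from confinement in $v$ plus hypoelliptic smoothing (the paper produces the confinement by hand, via the energy estimates (\ref{coercif})--(\ref{borne2}), which give quasi-accretivity, the growth bound (\ref{borne0}) and a bound on $\int v^{2}|g_{t}|^{2}\dsdvdx$ for the image of the unit ball, i.e.\ equi-integrability at infinity in $v$, and then quotes the hypoelliptic estimates of \cite{luc}); and the analyticity in $F$ is proved exactly as you propose, by conjugating with $e^{\gamma v^{2}/2}$ and showing that the perturbation $F(\partial_{v}-\gamma v)$ (your $-F\partial_{v}$ read in the weighted norm) is relatively bounded with relative bound zero, the paper doing this through a quadratic-form inequality rather than a resolvent bound of the type (\ref{relat}), before applying Kato's type (A) theory to the simple isolated eigenvalue $0$. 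Where you genuinely diverge is the construction of the invariant measure: you go through a Lyapunov function, Krylov--Bogolyubov, the support theorem and the strong Feller property, and then must import a Gaussian tail bound to place $\tilde f$ in $\lunpl$. The paper never needs such a bound: working in $\lunpl$ from the start, it uses conservation of the mass functional $w\mapsto\int w\,\dsdvdx$ (represented in $\lunpl$ by $e^{-\gamma v^{2}}$), compactness and positivity to obtain spectral radius one, a simple peripheral eigenvalue with no other peripheral spectrum, uniqueness, strict positivity and mixing, so that $\tilde f$ is delivered directly as an eigenvector of the compact semigroup in the weighted space. Your probabilistic detour is legitimate but buys nothing here and costs an extra input.

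Two cautions on that extra input and on details. The Gaussian decay ``of order $e^{-\gamma v^{2}}$'' for $\tilde f$ is a nontrivial quantitative fact you only quote; what you actually need for $\tilde f\in\lunpl$ is a pointwise rate strictly better than $\gamma/2$ so that $\tilde f^{2}e^{\gamma v^{2}}$ is integrable, and this deserves either a proof (e.g.\ exponential Lyapunov functions $e^{\alpha v^{2}}$, $\alpha<\gamma$, combined with hypoelliptic local regularity) or a precise reference. Relatedly, boundedness of the semigroup on $\lunpl$ does not follow from the decay of $\tilde f$, as your phrasing suggests; it needs its own estimate, which is precisely (\ref{borne0}). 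Finally, in your closing remark the direction lacking compactness is $v$, not $x$: after periodisation $x$ lives on the circle, and confinement in $v$ does hold -- it is exactly what the paper's moment estimates (and, in your version, the conjugated harmonic potential) provide, the hypoelliptic coupling being needed only to convert velocity regularity into regularity in $s$ and $x$.
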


\begin{proof}

Instead of working in the space $\lunpl$ , we can work in the
space $L^{2}(ds\,\dvdx)$ by using the isomorphism given by the
multiplication by function $e^{\gamma v^{2}/2}$. In that space we
obtain the new generator $\gen'_{F}$ given by
$$
\gen'_{F} g=-\partial_{s} g
-v\,\partial_{x}g-F\partial_{v}g +F\gamma v g
-\frac{\gamma^{2}v^{2}}{2}g+\frac{\gamma}{2}g
+\frac{1}{2}\partial_{v}^{2}g\;.
$$ Using integration by parts, it is easy to verify that
\begin{equation}\label{coercif}
\Re \int \bar g\,\big(\gen'_{F}\,g\big)  ds\,\dvdx=
$$
$$
\int\left(\frac{\gamma}{2} |g|^{2}+\gamma  F\,v\,|g|^{2}
-\frac{\gamma^{2}}{2} v^{2} |g|^{2}
-\frac{1}{2}\int \big|\partial_{v}g\big|^{2}\right)\dsdvdx
$$
$$
\le 
\left(\frac{\gamma}{2}+\big\|F\big\|_{\infty}^{2}\right)\int 
|g|^{2}\dsdvdx
-\int\left(\frac{\gamma^{2}}{4} v^{2} |g|^{2}
+\frac{1}{2} \big|\partial_{v}g\big|^{2}\right)\dsdvdx\;.
\end{equation} 
We see immediately that $-\gen'_{F}$ is quasi accretive (see \cite{kato} for
the definition and properties of the semi-groups generated by these
operators).

Let $g_{t}=e^{t\gen'_{F}}g_{0}$, using several integrations by parts
one gets easily
$$
\partial_{t}\int\big|g_{t}\big|^{2}\dsdvdx=2\gamma 
\int F\,v\,\big|g_{t}\big|^{2}\dsdvdx-\gamma^{2}\int v^{2}\,
\big|g_{t}\big|^{2}\dsdvdx
$$
$$
+\gamma \int\big|g_{t}\big|^{2}\dsdvdx
-\int\big|\partial_{v}g_{t}\big|^{2}\dsdvdx
$$ 
$$
\le -\frac{\gamma^{2}}{2}\int v^{2}\,
\big|g_{t}\big|^{2}\dsdvdx +
\big(\gamma +4\gamma^{2}\big\|F\big\|_{\infty}^{2}\big)
\int\big|g_{t}\big|^{2}\dsdvdx\;.
$$
We obtain immediately
\begin{equation}\label{borne0}
\big\|g_{t}\big\|\le e^{\big(\gamma/2
  +2\gamma^{2}\|F\|_{\infty}^{2}\big)t}
\big\|g_{0}\big\|
\end{equation}
and
\begin{equation}\label{borne1}
\int_{0}^{t} d\tau \int v^{2}\,
\big|g_{\tau}\big|^{2}\dsdvdx\le \frac{2}{\gamma^{2}}\;
 e^{\big(\gamma
  +4\gamma^{2}\|F\|_{\infty}^{2}\big)t}\;
\big\|g_{0}\big\|^{2}\;.
\end{equation}
Similarly, we get
\begin{equation}\label{borne2}
\partial_{t}\int v^{2}\,\big|g_{t}\big|^{2}\dsdvdx
$$
$$
=4
\int F\,v\,\big|g_{t}\big|^{2}\dsdvdx+
2\gamma 
\int F\,v^{3}\,\big|g_{t}\big|^{2}\dsdvdx
-\gamma^{2}\int v^{4}\,
\big|g_{t}\big|^{2}\dsdvdx
$$
$$
+\gamma \int v^{2}\,\big|g_{t}\big|^{2}\dsdvdx
+2 \int\big|g_{t}\big|^{2}\dsdvdx
-\int v^{2}\,\big|\partial_{v}g_{t}\big|^{2}\dsdvdx
$$ 
$$
\le C(\gamma, \|F\|_{\infty}) \int\big|g_{t}\big|^{2}\dsdvdx
\end{equation}
where $C(\gamma, \|F\|_{\infty})$ is a constant independent of
$g_{t}$. For $t>0$ fixed, we deduce from (\ref{borne1}) that there
exists $\xi(t)\in[0,t[$ such that 
$$
\int v^{2}\,
\big|g_{\xi(t)}\big|^{2}\dsdvdx\le \frac{2}{\gamma^{2}\,t}\;
 e^{\big(\gamma
  +4\gamma^{2}\|F\|_{\infty}^{2}\big)t}\;
\big\|g_{0}\big\|^{2}\;.
$$
Using (\ref{borne2}) and (\ref{borne0}) we get (for any $t>0$) 
$$
\int v^{2}\,
\big|g_{t}\big|^{2}\dsdvdx=\int v^{2}\,
\big|g_{\xi(t)}\big|^{2}\dsdvdx+\int_{\xi(t)}^{t}d\tau \;
\partial_{\tau}\int v^{2}\,
\big|g_{\tau}\big|^{2}\dsdvdx
$$
$$
\le\left( \frac{2}{\gamma^{2}\,t}+
C\,t \right)  e^{\big(\gamma   +4\gamma^{2}\|F\|_{\infty}^{2}\big)t}
\big\|g_{0}\big\|^{2}\;.
$$ 
In other words, for any $t>0$ the image of the unit ball by the
semi-group is equi-integrable at infinity in $v$.  Compactness follows
immediately using hypoelliptic estimates (see \cite{luc}).

From the standard control arguments (see \cite{luc}) applied to the
diffusion (\ref{diff2}), we obtain that the kernel of the semi-group
has dense support. We now observe that integration of a function
against $e^{-\gamma v^{2}}$ is preserved by the semi-group
evolution. This implies by standard arguments that the spectral radius
of the  semi-group is one, the invariant density  is unique, and
exponential mixing  holds (see \cite{luc}). Finally, as in 
equation  (\ref{coercif}), for $g$ in the domain of $\gen'$ we have

$$
\Re \int \bar g\,\big(\gen'_{F}\,g\big)  ds\,\dvdx
$$
$$
\ge 
\left(\frac{\gamma}{2}-2\big\|F\big\|_{\infty}^{2}\right)\int
|g|^{2}\dsdvdx
-\int\left(\frac{\gamma^{2}}{2} v^{2} |g|^{2}
+\frac{1}{2} \big|\partial_{v}g\big|^{2}\right)\dsdvdx\;.
$$
This implies for any $\lambda>0$ 
$$
\int \big|\partial_{v}g-\gamma vg\big|^{2} \dsdvdx\le \frac{1}{\lambda}
\big\|\gen'g\|^{2}_{2}+\big(4\,\lambda+
3\,\gamma+4\,\big\|F\big\|_{\infty}^{2}\big)\|g\|_{2}\;.
$$
In other words, the operator $ F\,\big(\partial_{v}-\gamma v\big)$ is
relatively bounded with respect to $\gen'_{0}$ with relative bound
zero and this implies the analyticity (see \cite{kato}).
\end{proof}

We can now complete the proof of Theorem \ref{principal3}. 

\begin{proof} (of Theorem \ref{principal3}). 

Repeating the argument in the proof  of Proposition \ref{asympt1},
using Proposition \ref{deuxtempsl} we get
$$
\lim_{t\to\infty} \frac{1}{t}\int_{0}^{t}\!\!\!\int\!\!\!\int 
v\,f(\tau,v,x)
d\tau\,dv\,dx=\frac{1}{\gamma T}\int_{0}^{T}\!\!\!\int_{0}^{L}
\!\!\!\int F(\tau,x)
\,\tilde f(\tau,v,x)
d\tau\,dv\,dx\;,
$$

We can now use perturbation theory to compute the right hand side near
$F=0$. For this purpose, it is convenient to fix a $C^{1}$ function
$G$ periodic in space and time and with zero average, and consider
$F=\epsilon\, G$ with $\epsilon$ small. Since $\tilde f$ is analytic
by Proposition \ref{deuxtempsl}, we can write
$$
\tilde f=\tilde f_{0}+\epsilon \tilde f_{1}+\epsilon^{2} \tilde f_{2}+
  \Oun \epsilon^{3}
$$ 
where 
$$
\tilde f_{0}=\frac{1}{L}\sqrt{\frac{\gamma}{\pi}}\;e^{-\gamma v^{2}}
$$
and for $n\ge 1$ the $\tilde f_{n}$ are functions of integral zero,
periodic in time of period $T$, defined recursively by
$$
\partial_{t}\tilde f_{n}+v\partial_{x}\tilde f_{n} -
\partial_{v}\big(\gamma v\tilde f_{n}\big)
-\frac{1}{2}\partial_{v}^{2}\tilde f_{n}=-G(t,x)\partial_{v}\tilde f_{n-1}\;.
$$
We get immediately
$$
\int_{0}^{T}\!\!\!\int_{0}^{L}\!\!\!\int G(\tau,x)
\,\tilde f_{0}(\tau,v,x)
d\tau\,dv\,dx=0\;,
$$
since $\tilde f_{0}$ is independent of $x$ and $t$ and $G$ has average
zero. Therefore we now have to look at the next order perturbation, namely
the second order in $\epsilon$ for  the average velocity. In other
words, we have
\begin{equation}\label{ordre2}
\lim_{t\to\infty} \frac{1}{t}\int_{0}^{t}\!\!\!\int_{0}^{L}\!\!\!\int 
v\,f(\tau,v,x)
d\tau\,dv\,dx=
$$
$$
\frac{\epsilon^{2}}{\gamma T}
\int_{0}^{T}\!\!\!\int_{0}^{L}\!\!\!\int G(\tau,x)
\,\tilde f_{1}(\tau,v,x)
d\tau\,dv\,dx\;+\Oun\epsilon^{3}\;.
\end{equation}

We first have to solve 
$$
\partial_{t}\tilde f_{1}+v\partial_{x}\tilde f_{1} -
\partial_{v}\big(\gamma v\tilde f_{1}\big)
-\frac{1}{2}\partial_{v}^{2}\tilde f_{1}=-G(t,x)\partial_{v}\tilde f_{0}
$$
to get $\tilde f_{1}$. For this purpose, we use Fourier transform in
all variables (recall that $t$ and $x$ are periodic variables).
We will denote by $\hat f_{1,p,q}(k)$ the Fourier transform of
$\tilde f_{1}$ (and similarly for other functions), namely
$$
\tilde f_{1}(t,v,x)=\sum_{p,q}e^{2\pi\,i\, p\,t/T}e^{2\pi\,i\, q\,x/L}
\int e^{i\,k\,v}\hat f_{1,p,q}(k)\,dk\;.
$$
We get 
\begin{equation}\label{eqf1}
\left(\frac{2\pi ip}{T}+\frac{k^{2}}{2}\right)\hat f_{1,p,q}
+\left(\gamma k-\frac{2\pi q}{L}\right)\frac{d}{dk}\hat f_{1,p,q}=-\frac{i\,k\,
\hat G_{p,q}}{2\pi\,L}\,e^{-k^{2}/(4\gamma)}\;.
\end{equation}
We now observe that equation (\ref{ordre2}),  only involves  the
integral of $\tilde f_{1}$  with respect to $v$ since $G$ does not
depend on $v$. Therefore we need only to compute $\hat
f_{1,p,q}(0)$. Let $h_{p,q}(k)$ be
the function
$$
h_{p,q}(k)=e^{k^{2}/(4\gamma)} e^{\pi\, q\,k/(\gamma^{2}L)} 
\,\left|1-\frac{\gamma kL}{2\pi q}
\right|^{2\pi^{2}q^{2}/(\gamma^{3}L^{2})+2\pi ip/(\gamma T)-1}\;.
$$
For $-2\pi |q|/(\gamma L)< k< 2\pi |q|/(\gamma L)$, 
this function is  a solution of 
$$
\left(\frac{2\pi ip}{T}+\frac{k^{2}}{2}\right)h_{p,q}(k)
-\frac{d}{dk}\left[\left(\gamma k-\frac{2\pi
    q}{L}\right)h_{p,q}(k)\right]=0\;. 
$$
For $q>0$, multiplying (\ref{eqf1}) by $h_{p,q}(k)$ 
and integrating over $k$ from $0$ to $2\pi q/(\gamma L)$, we get 
$$
\hat f_{1,p,q}(0)= \hat G_{p,q}\Gamma_{p,q}
$$
where
$$
\Gamma_{p,q}=-
 \frac{i}{4\pi^{2}q}
\int_{0}^{2\pi q/(\gamma L)}
 e^{\pi\, q\,k/(\gamma^{2}L)} 
\,\left|1-\frac{\gamma kL}{2\pi q}
\right|^{2\pi^{2}q^{2}/(\gamma^{3}L^{2})+2\pi ip/(\gamma T)-1}k\,
dk\;.
$$
Note that since $q\neq0$, the integral is convergent.
For $q<0$, one gets a similar result, namely
$$
 \Gamma_{p,q}=
 \frac{i}{4\pi^{2}q}
\int_{2\pi q/(\gamma L)}^{0}
 e^{\pi\, q\,k/(\gamma^{2}L)} 
\,\left|1-\frac{\gamma kL}{2\pi q}
\right|^{2\pi^{2}q^{2}/(\gamma^{3}L^{2})+2\pi ip/(\gamma T)-1}k\,
dk\;,
$$
and it is easy to verify that $\overline
\Gamma_{p,q}=\Gamma_{-p,-q}$. 
We now have
$$
\int_{0}^{T}\!\!\!\int_{0}^{L}\!\!\!\int G(\tau,x)
\,\tilde f_{1}(\tau,v,x)
d\tau\,dv\,dx=\sum_{p,q}\overline{\hat G_{p,q}} \;\Gamma_{p,q}\;  \hat
G_{p,q}\;.
$$
Since $G(t,x)$ is real, we have $\overline{\hat G_{p,q}}= \hat G_{-p,-q}$, and
therefore
$$
\int_{0}^{T}\!\!\!\int_{0}^{L}\!\!\!\int G(\tau,x)
\,\tilde f_{1}(\tau,v,x)
d\tau\,dv\,dx=\sum_{p,q} \big|\hat G_{p,q}\big|^{2}\; \frac{\Gamma_{p,q}+  
\Gamma_{-p,-q}}{2}\;.
$$
We observe that for $q>0$, 
$$
\frac{\Gamma_{p,q}+  
\Gamma_{-p,-q}}{2}=
 \frac{1}{4\pi^{2}q}
\int_{0}^{2\pi q/(\gamma L)}
 e^{\pi\, q\,k/(\gamma^{2}L)} 
\,\left|1-\frac{\gamma kL}{2\pi q}
\right|^{2\pi^{2}q^{2}/(\gamma^{3}L^{2})-1}\,
$$
$$
\times\, 
\sin\big((2\pi p/(\gamma T))\log(1-\gamma kL/(2\pi q))\big)\,k\,
dk\;,
$$
$$
=\frac{q}{\gamma^{2}L}\int_{0}^{1}
e^{2\pi^{2}q^{2}u/(\gamma^{3}L^{2})}
(1-u)^{2\pi^{2}q^{2}/(\gamma^{3}L^{2})-1} 
\sin\big((2\pi p/(\gamma T))\log(1-u)\big)\,u\,
du\;.
$$
Note that this quantity is equal to zero for $p=0$, and it is odd in
$p$. It can be expressed in terms of degenerate hypergeometric
functions.  We now have to
prove that for $p\neq 0$, this quantity is not zero, at least for one
pair of integers  $(p,q)$.  For this purpose, we will consider $q$
large. To alleviate the notation, we consider the asymptotic behavior
for $\alpha>0$ large and $\beta\in\mathbf{R}$ fixed of the integral
$$
J(\alpha)=\int_{0}^{1} e^{\alpha u} (1-u)^{\alpha-1}(1-u)^{i\beta}
 u\,du\;.
$$
Using steepest descent at the critical point $u=0$ (see
\cite{hormander}), one gets
$$
\Im J(\alpha) =
-\frac{\beta}{(2\alpha)^{3/2}\sqrt{\pi}}+\frac{\Oun}{\alpha^{2}}\;. 
$$
We apply this result with $\alpha=2\pi^{2}q^{2}/(\gamma^{3}L^{2})$ and 
$\beta=2\pi p/(\gamma T)$, and conclude that for $q$ large enough, 
$\big(\Gamma_{p,q}+  
\Gamma_{-p,-q}\big)/2\neq 0$ as required.   
\end{proof}

The proof of  Theorem \ref{principal3} is finished  as before.

\subsection{Proof of Theorem \ref{principal4}}

The scheme of the proof is similar to the proofs of the previous
Theorems. We only sketch the argument except for some particular
points. We assume $\gamma>0$, $\nu_{1}>0$ and $\nu_{2}>0$.
 One starts by reducing the problem to a periodic boundary condition
 in $x$. The key result is the analog of Proposition \ref{et2}.

\begin{proposition}\label{et4}
The semi-group defined by (\ref{kramers2}) is compact in
$L^{2}\big(e^{\gamma v^{2}}dv\,dx\big)$ (the $x$ variable being on the
circle of length $L$).  It is positivity preserving positivity
improving on the diagonal. Its peripheral spectrum is the simple
eigenvalue one.  The corresponding eigenvector $(\tilde f_{1}, \tilde
f_{2})$ can be chosen positive with dense domain and normalised
($\tilde f_{1}+\tilde f_{2}$ of integral one). The functions $\tilde
f_{1}$ and $\tilde f_{2}$ depend analytically on $F_{1}$, and $F_{2}$.
\end{proposition}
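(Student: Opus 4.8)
The plan is to deduce Proposition~\ref{et4} from the single--particle analysis of Proposition~\ref{deuxtempsl} together with the positivity argument already used for the two--state parabolic model in the proof of Proposition~\ref{et2}. Concretely, I would write the generator of (\ref{kramers2}) as $\gen=\gen_{1}+M$, where $\gen_{1}$ is the block diagonal operator whose two diagonal blocks are the (autonomous) Kramers generators
$$
\tfrac{1}{2}\partial_{v}^{2}f_{i}-v\,\partial_{x}f_{i}+\partial_{v}\big[(\gamma v-F_{i}(x))f_{i}\big]\qquad(i=1,2),
$$
and $M$ is the bounded coupling acting as multiplication by the constant matrix $\left(\begin{smallmatrix}-\nu_{1}&\nu_{2}\\ \nu_{1}&-\nu_{2}\end{smallmatrix}\right)$. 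For the compactness I would conjugate by multiplication by $e^{\gamma v^{2}/2}$ exactly as in the proof of Proposition~\ref{deuxtempsl}; the coercivity estimates of the type (\ref{coercif})--(\ref{borne2}) apply verbatim to each diagonal block (there is no $s$--variable here, but that only simplifies matters), and together with the hypoelliptic estimates quoted from \cite{luc} they show that $e^{t\gen_{1}}$ is a strongly continuous compact semi--group on $L^{2}(e^{\gamma v^{2}}\dvdx)$. Since $M$ is bounded, a Duhamel expansion writes $e^{t\gen}-e^{t\gen_{1}}$ as a norm convergent integral of compact operators (see also \cite{kato}), so $e^{t\gen}$ is itself strongly continuous and compact.

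For the positivity statements I would follow the proof of Proposition~\ref{et2}. The matrix $M$ is a Metzler (quasi--positive) matrix, so $e^{tM}$ has strictly positive entries for every $t>0$ and $e^{tM}\ge e^{-t(\nu_{1}+\nu_{2})}\mathrm{Id}$ entrywise; in particular $e^{tM}$ is positivity preserving. Each diagonal block of $e^{t\gen_{1}}$ is a scalar Kramers semi--group with the force $F_{i}$, which is positivity preserving and whose kernel has dense support, by the control arguments applied to the diffusion obtained from (\ref{diff2}) with the time--independent force $F_{i}$ (\cite{luc}). By the Trotter product formula $e^{t\gen}=\lim_{n}(e^{(t/n)\gen_{1}}e^{(t/n)M})^{n}$ is therefore positivity preserving, and moreover one gets, for $t>0$, the entrywise inequality $e^{t\gen}\ge e^{-t(\nu_{1}+\nu_{2})}e^{t\gen_{1}}$ between the $2\times2$ matrix valued kernels; hence the kernel of $e^{t\gen}$ is non negative with strictly positive diagonal entries, and, combined with the strict positivity of $e^{tM}$, the semi--group is irreducible.

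The spectral statement then follows from classical arguments. The total probability $\int\!\!\int(f_{1}+f_{2})\dvdx$ is preserved by the evolution (adding the two equations in (\ref{kramers2}) the coupling cancels and the remaining terms are total derivatives), which, exactly as in Proposition~\ref{deuxtempsl}, provides a strictly positive invariant linear functional; together with compactness and irreducibility, the Perron--Frobenius / Krein--Rutman theory (see \cite{davies}, \cite{luc}) gives that the spectral radius equals one, that the peripheral spectrum is the simple isolated eigenvalue one, and that the eigenvector $(\tilde f_{1},\tilde f_{2})$ may be chosen strictly positive; we then normalise it so that $\tilde f_{1}+\tilde f_{2}$ has integral one. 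For the analyticity I would argue as in Propositions~\ref{analytique} and \ref{deuxtempsl}: on each diagonal block the operator $F_{i}(\partial_{v}-\gamma v)$ is relatively bounded with respect to the force--free block generator with relative bound zero (this is the estimate established at the end of the proof of Proposition~\ref{deuxtempsl}), and $M$ is bounded; since the eigenvalue one is simple and isolated, analytic perturbation theory for holomorphic families of type (A) (\cite{kato}) yields that $(\tilde f_{1},\tilde f_{2})$ is real analytic in $(F_{1},F_{2})\in C^{1}$.

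The point that will require the most care is the first one, namely carrying the hypoelliptic compactness estimates and the dense--support (control) arguments of Proposition~\ref{deuxtempsl} over to the coupled system, and checking that the bounded coupling $M$ destroys neither the compactness of the semi--group nor the lower kernel bound needed for irreducibility. Both issues reduce to the single--component statements already proved, precisely because $\gen_{1}$ is block diagonal and $M$ acts by a bounded constant matrix; the remaining steps (positivity via Trotter, the Perron--Frobenius conclusion, and the type--(A) perturbation argument) are then routine.
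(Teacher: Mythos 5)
Your proposal is correct and matches the paper's intent: the paper's own proof of Proposition~\ref{et4} is only the one-line remark that it is ``very similar to the proof of Proposition~\ref{et2}'', and your argument is exactly the intended combination of that proof (block-diagonal Kramers generators plus the bounded Metzler coupling $M$, positivity via Trotter, Perron--Frobenius from conservation of total mass) with the compactness, hypoelliptic/control, and relative-bound-zero analyticity estimates already established in Proposition~\ref{deuxtempsl}. In fact your write-up supplies more detail than the paper does, and the details you give are sound.
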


\begin{proof}
The proof is very similar to the proof of Proposition \ref{et2} and we
only sketch the details for the different points. 

\end{proof}

It follows as before from the evolution equation that 
$$
I\big(\gamma,F_{1},F_{2},\nu_{1},\nu_{2}\big)=
\int\big(\tilde f_{1} F_{1}+\tilde f_{2} F_{2}\big)\dvdx\;.
$$
For fixed  $\gamma>0$, $\nu_{1}>0$ and $\nu_{2}>0$, this quantity is
real analytic in  $(F_{1},F_{2})$, and to check that it is non trivial
we investigate its behaviour near the origin. For this purpose, we set 
$(F_{1},F_{2})=\epsilon(G_{1},G_{2})$ with $G_{1}$ and $G_{2}$ two
$C^{1}$ functions, periodic of period $L$ and with zero average. We
now develop $I\big(\gamma,\epsilon G_{1},\epsilon
G_{2},\nu_{1},\nu_{2}\big)$ in series of $\epsilon$.
As for  model (\ref{modele2}), the terms of order $\epsilon$
and of order $\epsilon^{2}$ vanish. One then has to find
 a pair $(G_{1},G_{2})$
such that the term of order $\epsilon^{3}$ does not vanish.
The computations are  rather tedious  and will be detailed in the
appendix.

\appendix
\section{Proof of Theorem \ref{principal4}.}

We define the vectors 
$$
\vec f=\left(\begin{array}{c}
\tilde f_{1}\\
\tilde f_{2}\\
\end{array}\right)\quad\textrm{and}\quad 
\vec F=\left(\begin{array}{c}
 F_{1}\\
 F_{2}\\
\end{array}\right)\;.
$$
We then have
$$
I\big(\gamma,F_{1},F_{2},\nu_{1},\nu_{2}\big)=
\iint \vec F\bullet \vec f\;\dvdx\;.
$$
The  equation for the stationary solution of (\ref{kramers2}) can be
written 
\begin{equation}\label{statio1}
\frac{1}{2}\partial_{v}^{2}\vec f-v\partial_{x}\vec f
+\partial_{v}\big[\gamma v\vec f\big]+M\vec f=B(x)\;\partial_{v}\vec f
\end{equation}
where as before 
$$
M=\left(\begin{array}{cc}
-\nu_{1}&\nu_{2}\\
\nu_{1}&-\nu_{2}\\
\end{array}\right)
$$
and
$$
B(x)=\left(\begin{array}{cc}
F_{1}(x)&0\\
0&F_{2}(x)\\
\end{array}\right)\;.
$$
Without loss of generality, we can rescale $v$ and assume $\gamma=1$,
replacing $\nu_{1}$ and  $\nu_{2}$ by $\nu_{1}/\gamma$ and
$\nu_{2}/\gamma$ respectively, $L$ by $L \gamma^{-3/2}$,  $F_{1}$ and  $F_{2}$
by $F_{1}/\sqrt \gamma$ and $F_{2}/\sqrt \gamma$ respectively.
 
For reasons that will become obvious later on, it is more convenient
to consider instead of $\vec f$ the vector 
$$
\vec \psi= e^{v^{2}/2}\vec f\;.
$$
Let $\mathscr{L}$ be the operator defined by 
$$
\mathscr{L}h(v)=\partial_{v}^{2}h(v)-v^{2}h(v)+h\;,
$$
then the equation (\ref{statio1}) for the stationary solution can be
written
\begin{equation}\label{statio2}
\mathscr{L}\vec \psi-2 v\partial_{x} \vec \psi+ 2M  \vec
\psi
=2B(x)\;\big(\partial_{v}\vec \psi -v \vec \psi\big)\;,
\end{equation}
and we have
\begin{equation}\label{leI}
I\big(\gamma,F_{1},F_{2},\nu_{1},\nu_{2}\big)=
\iint \vec F\bullet \vec \psi\;e^{-v^{2}/2}\;\dvdx\;,
\end{equation}
with $\vec \psi$ normalised by
\begin{equation}\label{norme}
\iint \big(\psi_{1}+\psi_{2}\big) \;e^{-v^{2}/2}\;\dvdx=1\;.
\end{equation}
For $\vec F=\epsilon\,\vec G$ we have the expansion
\begin{equation}\label{psidev}
 \vec\psi= \vec\psi^{\;0}+\epsilon  \vec\psi^{\;1} +\epsilon^{2}
 \vec\psi^{\;2} +\mathscr{O}\big(\epsilon^{3}\big)
\end{equation}
where
$$
\vec\psi^{\;0}(v,x)=\frac{1}{L\big(\nu_{1}+\nu_{2}\big)\sqrt\pi}
\left(\begin{array}{c}
\nu_{2}\\
\nu_{1}\\
\end{array}\right)e^{-v^{2}/2}\;,
$$
and the vectors $\vec\psi^{\;j}$ are obtained recursively by solving
$$
\mathscr{L}\vec \psi^{\;j+1}-2 v\partial_{x} \vec \psi^{\;j+1}+ 2M  \vec
\psi^{\;j+1}
=2C(x)\;\big(\partial_{v}\vec \psi^{\;j} -v \vec \psi^{\;j}\big)\;,
$$
where 
$$
C(x)=\left(\begin{array}{cc}
G_{1}(x)&0\\
0&G_{2}(x)\\
\end{array}\right)\;.
$$
Note that $\vec \psi^{\;0}$ satisfies
$$
\mathscr{L}\vec \psi^{\;0}-2 v\partial_{x} \vec \psi^{\;0}+ 2M  \vec
\psi^{\;0}=\vec0
$$
and the normalisation condition (\ref{norme}), while for $j\ge1$, 
$\vec \psi^{\;j}$ should satisfy the normalisation condition
\begin{equation}\label{normal}
\iint \big(\psi_{1}^{j}+\psi_{2}^{j}\big) \;e^{-v^{2}/2}\;\dvdx=0\;.
\end{equation}
 Corresponding to the
expansion (\ref{psidev}), we have the expansion
$$
I\big(1,\epsilon G_{1},\epsilon G_{2},\nu_{1},\nu_{2}\big)=
\sum_{n=1}^{3} \epsilon^{n}
I_{n}\big(G_{1},G_{2},\nu_{1},\nu_{2}\big)\
+\mathscr{O}\big(\epsilon^{4}\big)\;,
$$
where 
$$
I_{n}\big(G_{1},G_{2},\nu_{1},\nu_{2}\big)
=\iint \vec G\bullet \vec \psi^{\;n-1}\;e^{-v^{2}/2}\;\dvdx\;.
$$
We have  immediately
$I_{1}\big(G_{1},G_{2},\nu_{1},\nu_{2}\big)=0$ since the averages of
$G_{1}$ and $G_{2}$ over $x$ vanish and $\vec \psi^{\;0}$ does not
depend on $x$.
In order to compute $I_{1}$, we need to compute $\vec\psi^{\;1}$ which
solves
$$
\mathscr{L}\vec \psi^{\;1}-2 v\partial_{x} \vec \psi^{\;1}+ 2M  \vec
\psi^{\;1}
=v\, e^{-v^{2}/2}\vec H(x)
$$
where 
$$
\vec H(x)=-\frac{4}{L\big(\nu_{1}+\nu_{2}\big)\sqrt\pi}
\left(\begin{array}{c}
\nu_{2}G_{1}(x)\\
\nu_{1}G_{2}(x)\\
\end{array}\right)\;.
$$
Since $\vec H$ is periodic of period $L$, we can decompose this
equation in Fourier series and get
$$
\mathscr{L}\vec \psi^{\;1}(p)-\frac{4\pi \,i\,p\,v}{L}
 \vec \psi^{\;1}(p)+ 2M  \vec
\psi^{\;1}(p)
=v\, e^{-v^{2}/2}\vec H(p)\;.
$$ 
We now observe that 
$$
\mathscr{L}-\frac{4\pi \,i\,p\,v}{L}+2M=e^{2\pi \,i\,p\,\partial_{v}/L}
\left(\mathscr{L} -\frac{4 \pi^{2}p^{2}}{L^{2}}+2M\right)
e^{-2\pi \,i\,p\,\partial_{v}/L}\;.
$$
This implies 
\begin{equation}\label{lepsi1}
 \vec
\psi^{\;1}(p) =e^{2\pi \,i\,p\,\partial_{v}/L}
\left(\mathscr{L} -\frac{4 \pi^{2}p^{2}}{L^{2}}+2M\right)^{-1}
e^{-2\pi \,i\,p\,\partial_{v}/L}\left(v\, e^{-v^{2}/2}\vec H(p)\right)\;.
\end{equation}
The spectrum of the self adjoint operator $\mathscr{L}$  in
$L^{2}(dv)$ is well known (see for example \cite{as} or \cite{gr}).
 The eigenvalues are the numbers $-2n$ with
$n\in\mathbf{N}^{*}$, and the corresponding normalised eigenvectors
are the functions
$$
e_{n}(v)=\frac{2^{-n/2}}{\pi^{1/4}\,(n!)^{1/2}}\;
H_{n}(v)\;e^{-v^{2}/2}
=\frac{(-1)^{n}\;2^{-n/2}}{\pi^{1/4}\,(n!)^{1/2}}\;
\;e^{v^{2}/2}\;\frac{d^{n}}{dv^{n}}e^{-v^{2}}\;, 
$$ 
where the $H_{n}$ are the Hermite polynomials. We have
$$
e^{-2\pi \,i\,p\,\partial_{v}/L}\left(v\, e^{-v^{2}/2}\right)=
\left(v-\frac{2\pi \,i\,p}{L}\right)e^{-v^{2}/2}e^{2\pi \,i\,p\,v/L} 
e^{2\,\pi^{2}\,p^{2}/L^{2}}\;,
$$
and we decompose this function on the Hermite basis $(e_{n})$.
It follows easily using integration by parts that
$$
\int e_{n}(v) \left(v-\frac{2\pi \,i\,p}{L}\right)e^{-v^{2}/2}e^{2\pi
  \,i\,p\,v/L}  
e^{2\,\pi^{2}\,p^{2}/L^{2}}dv
$$
$$
= (-1)^{n}\;e^{2\,\pi^{2}\,p^{2}/L^{2}}
\frac{2^{-n/2}}{\pi^{1/4}\,(n!)^{1/2}} \int 
\left(v-\frac{2\pi \,i\,p}{L}\right)e^{2\pi
  \,i\,p\,v/L}\;
\frac{d^{n}}{dv^{n}}e^{-v^{2}} dv
$$
$$
= e^{2\,\pi^{2}\,p^{2}/L^{2}}
\frac{2^{-n/2}}{\pi^{1/4}\,(n!)^{1/2}}\; \times
$$
$$
\int 
\left[v\left(\frac{2\pi \,i\,p}{L}\right)^{n}-\left(\frac{2\pi
    \,i\,p}{L}\right)^{n+1}+n\left(\frac{2\pi
    \,i\,p}{L}\right)^{n-1}\right]
e^{-v^{2}} e^{2\pi
  \,i\,p\,v/L}\;dv
$$
$$
=e^{\,\pi^{2}\,p^{2}/L^{2}}
\frac{2^{-n/2}\pi^{1/4}}{(n!)^{1/2}} \;\big(n+2\pi^{2}p^{2}/L^{2}\big)\;
\left(\frac{2\pi
    \,i\,p}{L}\right)^{n-1}\;.
$$
This formula holds for any $p\neq0$ and any $n\ge0$. For $p=0$, the
formula holds  for any $n\ge1$ (with the convention $0^{0}=1$).
 Finally, for $n=0$ and $p=0$ the
result is zero. 
We now have after some simple linear algebra
$$
\left(\mathscr{L} -\frac{4
  \pi^{2}p^{2}}{L^{2}}+2M\right)^{-1}\left(e_{n}\,\vec H(p)\right)(v)=
e_{n}(v)\left(-2n -\frac{4
  \pi^{2}p^{2}}{L^{2}}+2M\right)^{-1}\vec H(p)
$$
$$
=\frac{e_{n}(v)}{2 \big(n +2
  \pi^{2}p^{2}/L^{2}\big)\big(n +\nu_{1}+\nu_{2}+2
  \pi^{2}p^{2}/L^{2}\big)} 
$$
$$
\times \left(\begin{array}{cc}
-\nu_{2}-n -2\pi^{2}p^{2}/L^{2}&
-\nu_{2}\\
-\nu_{1}&
-\nu_{1}-n -2   \pi^{2}p^{2}/L^{2}\\
\end{array}\right)\vec H(p)\;.
$$
We can write using Fourier series
$$
I_{2}\big(G_{1},G_{2},\nu_{1},\nu_{2}\big)
=\sum_{p} \overline
{\vec G(p)}\bullet \int  \vec \psi^{\;1}(v,p)\;e^{-v^{2}/2}\;dv
$$
$$
=\sum_{p}\sum_{n} \overline
{\vec G(p)}\bullet
\left(\begin{array}{cc}
-\nu_{2}-n -2\pi^{2}p^{2}/L^{2}&
-\nu_{2}\\
-\nu_{1}&
-\nu_{1}-n -2   \pi^{2}p^{2}/L^{2}\\
\end{array}\right)
\vec H(p)
$$
$$
\times\; \frac{1}{2 
\big(n +\nu_{1}+\nu_{2}+2
  \pi^{2}p^{2}/L^{2}\big)} \;
e^{\,\pi^{2}\,p^{2}/L^{2}}
\frac{2^{-n/2}\pi^{1/4}}{(n!)^{1/2}} 
\left(\frac{2\pi
    \,i\,p}{L}\right)^{n-1}
$$
$$
\times\; \int \left(e^{2\pi \,i\,p\,\partial_{v}/L}e_{n}\right)(v)
\;e^{-v^{2}/2}\;dv\;.
$$
Since the operator $i\partial_{v}$ is self adjoint, we get
$$
\int  
e^{-v^{2}/2} \left(e^{2\pi \,i\,p\,\partial_{v}/L}e_{n}\right)(v)
\;dv=
\int  
\overline{e^{2\pi \,i\,p\,\partial_{v}/L} e^{-v^{2}/2} }\;e_{n}(v)
\;dv
$$
$$
=\int  
e_{n}(v)
\;e^{-\big(v-2\pi \,i\,p/L\big)^{2}/2}\;dv
=e^{2\,\pi^{2}\,p^{2}/L^{2}}\int  
e_{n}(v)
\;e^{-v^{2}/2}\;
e^{2\,i\,v\,\pi\,p\,v/L}\;dv
\;.
$$
By a computation similar to the  above one, we get
$$
\int  
e^{2\pi \,i\,p\,v/L}e_{n}(v)
\;e^{-v^{2}/2}\;dv=
e^{\,\pi^{2}\,p^{2}/L^{2}}
\frac{2^{-n/2}\pi^{1/4}}{(n!)^{1/2}}
\left(\frac{2\pi
    \,i\,p}{L}\right)^{n}\;.
$$
After some simple algebra, we obtain 
$$
I_{2}\big(G_{1},G_{2},2\nu_{1},2\nu_{2}\big)
$$
$$
=\frac{2}{L(\nu_{1}+\nu_{2})}\sum_{n,\,p,\, n+p^{2}>0}
e^{2\,\pi^{2}\,p^{2}/L^{2}}
\frac{2^{-n}}{n!}
\left(\frac{2\pi
    \,i\,p}{L}\right)^{2n-1}
\frac{R(p)}{\big(n +\nu_{1}+\nu_{2}+2
  \pi^{2}p^{2}/L^{2}\big)}
$$
where 
$$
R(p)=\overline{G_{1}(p)}G_{1}(p)\nu_{2}\big(\nu_{2}+n+2
  \pi^{2}p^{2}/L^{2}\big)
+\overline{G_{2}(p)}G_{2}(p)\nu_{1}\big(\nu_{1}+n+2
  \pi^{2}p^{2}/L^{2}\big)
$$
$$
+\nu_{1}\nu_{2}\big(\overline{G_{1}(p)}G_{2}(p)
+\overline{G_{2}(p)}G_{1}(p)\big)\;.
$$

Since $G_{1}$ and $G_{2}$ are real, we have $\overline{G_{1}(p)}=
G_{1}(-p)$ and $\overline{G_{2}(p)}= G_{2}(-p)$. This implies
immediately that $R(p)$ is even in $p$ and therefore 
$I_{2}\big(G_{1},G_{2},\nu_{1},\nu_{2}\big)=0$.  A similar computation
(involving only $p=0$) shows that the normalisation condition
(\ref{normal}) holds.
We now need to compute
$I_{3}\big(G_{1},G_{2},\nu_{1},\nu_{2}\big)$, and we recall that it is
given by
\begin{equation}\label{lei3ini}
I_{3}\big(G_{1},G_{2},\nu_{1},\nu_{2}\big)=
\sum_{p} \overline
{\vec G(p)}\bullet \int  \vec \psi^{\;2}(v,p)\;e^{-v^{2}/2}\;dv
\end{equation}
where
$$
\vec \psi^{\;2}(v,p)=e^{2\pi \,i\,p\,\partial_{v}/L}
\left(\mathscr{L} -\frac{4 \pi^{2}p^{2}}{L^{2}}+2M\right)^{-1}
e^{-2\pi \,i\,p\,\partial_{v}/L}\left(\vec J(v,p)\right)\;.
$$
and where $\vec J(v,p)$ is the Fourier series of the vector 
$$
\vec J(v,x)=
2 C(x)\left(\partial_{v}\vec\psi^{\;1}(v,x)-v\vec\psi^{\;1}(v,x)\right)\;.
$$
As above, using that the operator $i\partial_{v}$ is self adjoint, we
get
$$
I_{3}\big(G_{1},G_{2},2\nu_{1},2\nu_{2}\big)=
\sum_{p}
\int\overline{\left(\mathscr{L} -\frac{4
    \pi^{2}p^{2}}{L^{2}}+2M^{t}\right)^{-1}
e^{2\pi \,i\,p\,\partial_{v}/L}\left(e^{-v^{2}/2}\vec G(p)\right)}
$$
$$
\bullet\; e^{-2\pi \,i\,p\,\partial_{v}/L}\left(\vec J(v,p)\right)\;dv\;.
$$
We can now compute everything in terms of series of Hermite
coefficients. 
Denoting   by $C(p)$ the Fourier coefficients of the matrix
$C(x)$ we  get 
$$
e^{-2\pi \,i\,p\,\partial_{v}/L}\vec J(v,p)=2\sum_{q}C(p-q) e^{-2\pi
  \,i\,p\,\partial_{v}/L} 
\left(\partial_{v}\vec\psi^{\;1}(v,q)-v\vec\psi^{\;1}(v,q)\right)
$$
$$
=2\sum_{q}C(p-q)\; e^{-2\pi \,i\,p\,\partial_{v}/L}
$$
$$
\big(\partial_{v}-v\big)e^{2\pi \,i\,q\,\partial_{v}/L}
\left(\mathscr{L} -\frac{4 \pi^{2}q^{2}}{L^{2}}+2M\right)^{-1}
e^{-2\pi \,i\,q\,\partial_{v}/L}\left(v\, e^{-v^{2}/2}\vec H(q)\right)
$$
$$
=2\sum_{q}C(p-q) e^{2\pi \,i\,(p-q)\,\partial_{v}/L}
$$
$$
\big(\partial_{v}-v+2\pi \,i\,q/L\big)
\left(\mathscr{L} -\frac{4 \pi^{2}q^{2}}{L^{2}}+2M\right)^{-1}
e^{-2\pi \,i\,q\,\partial_{v}/L}\left(v\, e^{-v^{2}/2}\vec H(q)\right).
$$
We have already computed (for $n\ge 0$ if $p\neq0$ and for $p=0$ if
$n>0$) 
\begin{equation}\label{parte1}
\left(\mathscr{L} -\frac{4 \pi^{2}q^{2}}{L^{2}}+2M\right)^{-1}
e^{-2\pi \,i\,q\,\partial_{v}/L}\left(v\, e^{-v^{2}/2}\vec H(q)\right)
$$
$$
=\sum_{n}
e^{\,\pi^{2}\,q^{2}/L^{2}}
\frac{2^{-n/2}\pi^{1/4}}{(n!)^{1/2}}
\left(\frac{2\pi
    \,i\,q}{L}\right)^{n-1}
 \frac{e_{n}(v)}{2 
\big(n +\nu_{1}+\nu_{2}+2
  \pi^{2}q^{2}/L^{2}\big)}
$$
$$
\left(\begin{array}{cc}
-\nu_{2}-n -2\pi^{2}q^{2}/L^{2}&
-\nu_{2}\\
-\nu_{1}&
-\nu_{1}-n -2   \pi^{2}q^{2}/L^{2}\\
\end{array}\right)
\vec H(q)
\end{equation}
By a similar computation left to the reader, we get
\begin{equation}\label{parte2}
\left(\mathscr{L} -\frac{4
    \pi^{2}p^{2}}{L^{2}}+2M^{t}\right)^{-1}
e^{2\pi \,i\,p\,\partial_{v}/L}\left(e^{-v^{2}/2}\vec G(p)\right)
$$
$$
=\sum_{m}
e^{\,\pi^{2}\,p^{2}/L^{2}}
\frac{2^{-m/2}\pi^{1/4}}{(m!)^{1/2}}
\left(-\frac{2\pi
    \,i\,p}{L}\right)^{m}
$$
$$
\times \frac{e_{m}(v)}{2 \big(m +2
  \pi^{2}p^{2}/L^{2}\big)\big(m +\nu_{1}+\nu_{2}+2
  \pi^{2}p^{2}/L^{2}\big)}
$$
$$
\left(\begin{array}{cc}
-\nu_{2}-m -2\pi^{2}p^{2}/L^{2}&
-\nu_{1}\\
-\nu_{2}&
-\nu_{1}-m -2   \pi^{2}p^{2}/L^{2}\\
\end{array}\right)
\vec G(p)
\end{equation}

It finally remains to compute
$$
\int e_{m}(v) e^{-2\,\pi\, i\, (p-q)\,\partial_{v}/L}
\big(\partial_{v}-v+2\pi \,i\,q/L\big) e_{n}(v)dv
$$
$$
=-\sqrt2\,\sqrt{n+1}
\int e_{m}(v) e_{n+1}\big(v-2\,\pi\, i\, (p-q)/L\big)\,dv
$$
$$
+\frac{2\pi \,i\,q}{L} \int e_{m}(v) 
e_{n}\big(v-2\,\pi\, i\, (p-q)/L\big)\,dv\;,
$$
since
$$
\left(\frac{d}{dv}-v\right)e_{n}(v)=-\sqrt2\,\sqrt{n+1}\;e_{n+1}(v)\;.
$$
We now compute the quantity 
$$
\gamma_{n,m}(r) =\int e_{m}(v) 
e_{n}\big(v-2\,\pi\, i\, r/L\big)\,dv
$$
$$
=
\frac{2^{-(n+m)/2}e^{2\,\pi^{2}\,r^{2}/L^{2}}}
{\sqrt\pi\;(n!\,m!)^{1/2}}
\int
e^{-v^{2}}\;e^{2\,\pi\, i\, r\,v/L}
 H_{m}(v)\;H_{n}\big(v-2\,\pi\, i\, r/L\big)\;dv
$$
$$
=\frac{2^{-(n+m)/2}e^{\pi^{2}\,r^{2}/L^{2}}}
{\sqrt\pi\;(n!\,m!)^{1/2}}
\int
e^{-\big(v-\pi\, i\, r/L\big) ^{2}}\;
 H_{m}(v)\;H_{n}\big(v-2\,\pi\, i\, r/L\big)\;dv
$$
$$
=\frac{2^{-(n+m)/2}e^{\pi^{2}\,r^{2}/L^{2}}}
{\sqrt\pi\;(n!\,m!)^{1/2}}
\int e^{-v^{2}}
 H_{m}\big(v+\,\pi\, i\, r/L\big)\;H_{n}\big(v-\,\pi\, i\, 
r/L\big)\;dv\;.
$$
When $m\le n$, we obtain 
$$ 
\gamma_{n,m}(r)
=\frac{(m!)^{1/2}2^{(n-m)/2}e^{\pi^{2}\,r^{2}/L^{2}}}
{(n!)^{1/2}}\left(-\,\frac{i\,\pi\, r}{L}\right)^{n-m}
L_{m}^{n-m}\left(-\,\frac{2\pi^{2}\, r^{2}}{L^{2}}\right)\;,
$$
where $L_{r}^{s}$ denotes a Laguerre polynomial (see \cite{gr},
formula \textbf{7.377}).  Similarly, when $m\ge n$ we get
$$
\gamma_{n,m}(r) =\frac{(n!)^{1/2}2^{(m-n)/2}e^{\pi^{2}\,r^{2}/L^{2}}}
{(m!)^{1/2}}
\left(\,\frac{i\,\pi\, r}{L}\right)^{m-n}
L_{n}^{m-n}\left(-\,\frac{2\pi^{2}\, r^{2}}{L^{2}}\right)\;.
$$
We can now use this result together with equations (\ref{parte1}) and
(\ref{parte2}) into the expression (\ref{lei3ini}). We get 
$$
I_{3}\big(G_{1},G_{2},\nu_{1},\nu_{2}\big)=
$$
$$
-
\frac{1}{L\big(\nu_{1}+\nu_{2}\big)}
\sum_{n,m,p,q}
\left(\frac{2\pi\,i\, q}{L}\gamma_{n,m}(p-q)-
\sqrt2\,\sqrt{n+1}\gamma_{n+1,m}(p-q)\right)
$$
$$
e^{\,\pi^{2}\,q^{2}/L^{2}}
\frac{2^{-n/2}\pi^{1/4}}{(n!)^{1/2}}
\left(\frac{2\pi
    \,i\,q}{L}\right)^{n-1}
e^{\,\pi^{2}\,p^{2}/L^{2}}
\frac{2^{-m/2}\pi^{1/4}}{(m!)^{1/2}}
\left(\frac{2\pi
    \,i\,p}{L}\right)^{m}
$$
$$
\times \frac{ S(p,q)}{ 
\big(m +2
  \pi^{2}p^{2}/L^{2}\big)
\big(m +\nu_{1}+\nu_{2}+2
  \pi^{2}p^{2}/L^{2}\big)
\big(n +\nu_{1}+\nu_{2}+2
  \pi^{2}q^{2}/L^{2}\big)} 
$$
where 
$$
S(p,q)=\left(\left(\begin{array}{cc}
\nu_{2}+m +2\pi^{2}p^{2}/L^{2}&
\nu_{1}\\
\nu_{2}&
\nu_{1}+m +2   \pi^{2}p^{2}/L^{2}\\
\end{array}\right)
\overline{\vec G(p)}\right)\bullet
$$
$$
\left(\left(\begin{array}{cc}
G_{1}(p-q)&
0\\
0&
G_{2}(p-q)\\
\end{array}\right)
\;
\left(\begin{array}{cc}
\nu_{2}+n +2\pi^{2}q^{2}/L^{2}&
\nu_{2}\\
\nu_{1}&
\nu_{1}+n +2   \pi^{2}q^{2}/L^{2}\\
\end{array}\right)\right.
$$
$$
\times \;\left.\left(\begin{array}{c}
\nu_{2}G_{1}(q)\\
\nu_{1}G_{2}(q)\\
\end{array}\right)\right)\;.
$$
A numerical simulation using for $G1$ and $G2$ real Fourier
polynomials of order two and random coefficients give a nonzero result
for $I_{3}\big(G_{1},G_{2},1,2\big)$ and $L=10$.


\end{document}